%
%
%
%
%
\documentclass[12pt,twoside]{amsart}
\date{10 September 2010}
\usepackage{latexsym,amsthm,amsmath,amsfonts,amscd,amssymb}
\usepackage{hyperref}
\usepackage[ansinew]{inputenc}
\usepackage[all]{xy}
\usepackage{graphics}
\usepackage{lscape}
\usepackage{array}
\setlength{\parskip}{0.3\baselineskip}
\setlength{\extrarowheight}{5pt}
\setlength{\oddsidemargin}{5pt} \setlength{\evensidemargin}{5pt}
\setlength{\textwidth}{445pt} \setlength{\textheight}{645pt}
\setlength{\topmargin}{-20pt}


\theoremstyle{plain}  
\newtheorem{theorem}{Theorem}[section]

\newtheorem*{theorem*}{Theorem}

\newtheorem{corollary}[theorem]{Corollary}
\newtheorem{lemma}[theorem]{Lemma}
\newtheorem{proposition}[theorem]{Proposition}

\newtheorem{tech-lemma}[theorem]{Technical Lemma}
\newtheorem{definition}[theorem]{Definition}
\theoremstyle{remark}
\newtheorem{example}[theorem]{Example}

\newtheorem{remark}[theorem]{Remark}
\newtheorem*{remark*}{Remark}

\newtheorem*{claim*}{Claim}
\numberwithin{equation}{section}
\renewcommand{\leq}{\leqslant}

\renewcommand{\geq}{\geqslant}

\newcommand{\R}{\mathbb{R}}
\newcommand{\Z}{\mathbb{Z}}
\newcommand{\CC}{\mathbb{C}}
\newcommand{\HH}{\mathbb{H}}
\newcommand{\M}{{\mathcal M}}
\newcommand{\N}{{\mathcal N}}

\newcommand{\U}{\mathrm{U}}
\newcommand{\GL}{\mathrm{GL}}

\newcommand{\SL}{\mathrm{SL}}

\newcommand{\Sp}{\mathrm{Sp}}

\newcommand{\la}{\langle}
\newcommand{\ra}{\rangle}
\newcommand{\st}{\;|\;}

\DeclareMathOperator{\ad}{ad}
\DeclareMathOperator{\Ad}{Ad}
\DeclareMathOperator{\Aut}{Aut}

\DeclareMathOperator{\rk}{rk}
\DeclareMathOperator{\im}{im}
\DeclareMathOperator{\coker}{coker}
\DeclareMathOperator{\Hom}{Hom}
\DeclareMathOperator{\End}{End}

\newcommand{\liem}{\mathfrak{m}}
\newcommand{\liez}{\mathfrak{z}}

\newcommand{\liemc}{\mathfrak{m}^{\mathbb{C}}}
\newcommand{\lieh}{\mathfrak{h}}
\newcommand{\liehc}{\mathfrak{h}^{\mathbb{C}}}
\newcommand{\lieg}{\mathfrak{g}}
\newcommand{\liegc}{\mathfrak{g}^{\mathbb{C}}}



\setcounter{secnumdepth}{3}

\setlength{\marginparwidth}{1.9cm}
\let\oldmarginpar\marginpar
\renewcommand\marginpar[1]{\oldmarginpar{\tiny\bf\begin{flushleft} #1
\end{flushleft}}}


%
%

\begin{document}

%
%

\title[Higgs bundles for the non-compact dual of the unitary group]
{Higgs bundles for the non-compact dual of the unitary group}
%
%

\author[O. García-Prada]{Oscar García-Prada}
\address{O. García-Prada\\ Instituto de Ciencias Matemáticas\\
CSIC-UAM-UC3M-UCM\\
Serrano 121\\
28006 Madrid\\ Spain.}
\email{oscar.garcia-prada@uam.es}

\author[A. G. Oliveira]{André G. Oliveira}
\address{A. G. Oliveira\\ Departamento de Matemática\\
  Universidade de Trás-os-Montes e Alto Douro\\
  Quinta dos Prados\\ 5000-911 Vila Real \\ Portugal.}
\email{agoliv@utad.pt}

\thanks{%
Authors partially supported by Ministerio de
  Educación y Ciencia and Conselho de Reitores das
  Universidades Portuguesas through Acção Integrada Luso-Espanhola nº E-38/09, Moduli of quadratic bundles (2009-2010).
First author partially supported by the Spanish Ministerio de Ciencia e Innovaci\'on
(MICINN) under grant MTM2007-67623. The first author thanks
the Max Planck Institute for Mathematics in Bonn --- that he was
visiting when this paper
was completed --- for support.
  Second author partially supported by Fundação para a Ciência e Tecnologia - FCT (Portugal) through the Centro de Matemática da Universidade de Trás-os-Montes e Alto Douro - CMUTAD, and through Project PTDC/MAT/099275/2008.
Both authors thank the referee for invaluable suggestions that improved the presentation and for drawing our attention to a gap in the first version of the paper.}
\keywords{Semistable Higgs bundles, connected components of moduli spaces}
\subjclass[2000]{14D20, 14F45, 14H60}

\begin{abstract}
Using Morse-theoretic techniques, we show that the moduli space of $\U^*(2n)$-Higgs bundles over a compact Riemann surface is connected.
\end{abstract}

\maketitle

%
%

\section{Introduction}\label{section:Introduction}

Let $X$ be a compact Riemann surface of genus $g\geq 2$, and let
$\M_G$ be the moduli space of polystable $G$-Higgs bundles over $X$, 
where $G$ is a real reductive Lie group. $G$-Higgs bundles for  $G=\GL(m,\CC)$ were 
introduced by Hitchin in \cite{hitchin:1987}. In this case a  $G$-Higgs
bundle is a pair $(V,\varphi)$ consisting of a holomorphic bundle $V$ over $X$ 
and a holomorphic section $\varphi$ of the bundle $\End V$ twisted with the
canonical bundle of $X$.
In this paper we study the moduli space $\M_{\U^*(2n)}$,
where $\U^*(2n)$ is the subgroup of $\GL(2n,\CC)$ consisting of matrices $M$ 
verifying that $\overline  MJ_n=J_nM$ where
$J_n=\begin{pmatrix}
     0 & I_n \\
     -I_n & 0
\end{pmatrix}.$
This group is the non-compact dual of $\U(2n)$ in the sense that 
the non-compact  symmetric space $\U^*(2n)/\Sp(2n)$ is the dual of the 
compact symmetric space $\U(2n)/\Sp(2n)$ in Cartan's classification
of symmetric spaces (cf. \cite{helgason:2001}).

The complex general linear group $\GL(m,\CC)$ has as real forms the groups $\U(p,q)$, with $p+q=m$ 
(including the compact real form $\U(m)$), the split real form $\GL(m,\R)$ and, when $m=2n$  also $\U^*(2n)$. 
In a similar fashion as the images in $\M_{\GL(m,\CC)}$ of the moduli spaces $\M_{\U(p,q)}$ with $p+q=m$  appear as fixed point subvarieties of 
$\M_{\GL(m,\CC)}$ under the involution $(V,\varphi)\mapsto (V,-\varphi)$,  the image of the moduli space $\M_{\U^*(2n)}$ in $\M_{\GL(m,\CC)}$ is a fixed point subvariety of  $\M_{\GL(2n,\CC)}$ under the involution 
$(V,\varphi)\mapsto (V^*,\varphi^t)$, together with $\M_{\GL(2n,\R)}$ (see \cite{garcia:2007} and \cite{garcia-prada-ramanan}).
The number of connected components of $\M_{\U(p,q)}$ were determined in 
\cite{bradlow-garcia-prada-gothen:2003} and the ones of $\M_{\GL(m,\R)}$ in 
\cite{bradlow-garcia-prada-gothen:2004} and \cite{hitchin:1992}. The real form $\U^*(2n)$ was therefore the remaining one for which the number of connected components was still to be determined. 
In this paper we prove  the following.
\begin{theorem*}
The moduli space $\M_{\U^*(2n)}$ of $\U^*(2n)$-Higgs bundles over $X$ is connected.
\end{theorem*}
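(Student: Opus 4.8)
The plan is to use the Morse-theoretic method introduced by Hitchin \cite{hitchin:1987} and developed for real forms by Bradlow, García-Prada and Gothen \cite{bradlow-garcia-prada-gothen:2003}. A $\U^*(2n)$-Higgs bundle consists of a rank $2n$ holomorphic symplectic bundle $(V,\omega)$ — equivalently a principal $\Sp(2n,\CC)$-bundle, since $\Sp(2n,\CC)$ is the complexification of the maximal compact $\Sp(2n)$ of $\U^*(2n)$ — together with a Higgs field $\varphi\in H^0(\End V\otimes K)$ lying in the $\mathfrak m^{\CC}$-part of the Cartan decomposition $\mathfrak{gl}(2n,\CC)=\mathfrak{sp}(2n,\CC)\oplus\mathfrak m^{\CC}$, where $\mathfrak m^{\CC}\cong\Lambda^2 V$; concretely $\varphi$ is self-adjoint with respect to $\omega$, i.e. $\omega(\varphi u,v)=\omega(u,\varphi v)$. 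By the Hitchin--Kobayashi correspondence for $\U^*(2n)$ the moduli space $\M_{\U^*(2n)}$ is homeomorphic to a moduli space of solutions of the Hitchin equations, on which the function $f(V,\varphi)=\norm{\varphi}^2$ (the $L^2$-norm squared of the Higgs field) is defined. This $f$ is the moment map for the Hamiltonian circle action $e^{i\theta}\cdot(V,\varphi)=(V,e^{i\theta}\varphi)$, and it is proper and bounded below.

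The first reduction is to the subvariety of local minima. Since $f$ is proper and bounded below and the downward gradient flow converges, every connected component of $\M_{\U^*(2n)}$ contains a local minimum of $f$; hence $\#\pi_0(\M_{\U^*(2n)})\leq\#\pi_0(\N)$, where $\N\subset\M_{\U^*(2n)}$ is the subvariety of local minima. It therefore suffices to show that $\N$ is connected.

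Next I would identify $\N$. The critical points of $f$ are the fixed points of the circle action, and these underlie a complex variation of Hodge structure: $V=\bigoplus_k V_k$ with $\varphi(V_k)\subseteq V_{k+1}\otimes K$, and with $\omega$ pairing $V_k$ with $V_{-k}$. To decide which critical points are local minima one analyses the Hessian of $f$, equivalently the decomposition of the deformation complex $\mathfrak{sp}(V)\xrightarrow{\ad(\varphi)}\mathfrak m^{\CC}(V)\otimes K$ into circle weights: a critical point is a local minimum precisely when the negative-weight part of this complex carries no infinitesimal deformations (vanishing first hypercohomology). The expected conclusion is that this forces $\varphi=0$: using the symplectic pairing $V_k\cong V_{-k}^*$ together with the $\omega$-self-adjointness of $\varphi$, any nontrivial Hodge decomposition produces a nonzero negative-weight deformation, so a nonzero Higgs field can never be a local minimum. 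Thus $\N$ coincides with the locus $\{\varphi=0\}$, which is the moduli space $\M_{\Sp(2n,\CC)}$ of polystable $\Sp(2n,\CC)$-bundles.

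Finally, $\M_{\Sp(2n,\CC)}$ is connected because $\Sp(2n,\CC)$ is connected and simply connected, so $\pi_1(\Sp(2n,\CC))=0$ and there is a single topological type of principal $\Sp(2n,\CC)$-bundle over $X$; by Ramanathan's description of $\pi_0$ of the moduli of semistable principal bundles this space is connected. Combined with the previous step, $\N$ is connected and hence so is $\M_{\U^*(2n)}$. I expect the main obstacle to be the third step: rigorously carrying out the weight analysis of the Hessian on an \emph{arbitrary} system of Hodge bundles and proving that the symplectic (quaternionic) symmetry always supplies a destabilizing negative-weight direction whenever $\varphi\neq0$. The rank-one case $n=1$, where $\mathfrak m^{\CC}\cong\mathcal O$ so that $\varphi$ is a scalar holomorphic $1$-form times the identity and any fixed point forces $\varphi=0$, is a useful sanity check, but the higher-rank weight bookkeeping is where the real work lies.
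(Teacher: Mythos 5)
Your outline reproduces the paper's strategy step for step --- Hitchin's proper functional, reduction to the subvariety $\N$ of local minima, the claim $\N=\{\varphi=0\}\cong\M_{\Sp(2n)}$, and Ramanathan's theorem for the connectedness of the latter --- but the step you yourself flag as ``where the real work lies'' is the entire mathematical content of the theorem, and as written it is a gap, not a proof. The paper's argument that a critical point with $\varphi\neq 0$ cannot be a minimum is not a soft consequence of the symplectic pairing: at a Hodge bundle $W=\bigoplus_{j=-m}^{m}F_j$ one decomposes $\End(W)=\bigoplus_k U_k$ into weight spaces for the infinitesimal circle action and splits each $U_k=U_k^+\oplus U_k^-$ into its $\Lambda_\Omega^2W$- and $S_\Omega^2W$-components; a stable and simple critical point is a local minimum if and only if $\ad(\varphi):U_k^+\to U_{k+1}^-\otimes K$ is an isomorphism for all $k\geq 1$, and this equivalence itself requires an Euler--characteristic estimate on the weight-$k$ subcomplexes together with the vanishing of $\HH^0$ and $\HH^2$ of the deformation complex (which in turn uses that stable and simple $\U^*(2n)$-Higgs bundles are stable as $\GL(2n,\CC)$-Higgs bundles). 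One then rules out $\varphi\neq0$ by observing that $U_{2m+1}^-\otimes K=0$ while $U_{2m}^+\neq 0$, the latter because $\Hom(F_{-m},F_m)$ cannot consist entirely of $\Omega$-symmetric maps (otherwise composing with the isomorphism $\omega_m:F_m\to F_{-m}^*$ would identify all of $\Hom(F_{-m},F_{-m}^*)$ with $\Lambda^2F_{-m}^*$). Your heuristic that ``the symplectic pairing supplies a destabilizing negative-weight direction'' does not by itself produce a nonzero class in the first hypercohomology of the relevant positive-weight subcomplex, which is what Proposition 4.2 of the paper actually demands.

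There is a second, unacknowledged gap: the Hessian/weight analysis is only valid at smooth points of $\M_{\U^*(2n)}$, and for real groups stability does not imply simplicity, so the critical locus contains stable-but-non-simple and strictly polystable points where your third step does not apply. The paper deals with these by first proving structure theorems --- a stable $\U^*(2n)$-Higgs bundle decomposes as a direct sum of stable and simple ones, and a polystable one as a direct sum of stable $G_i$-Higgs bundles with $G_i$ one of $\U^*(2n_i)$, $\GL(n_i,\CC)$, $\Sp(2n_i)$, $\U(n_i)$ --- and then invoking the additivity of $f$ under direct sums to reduce the minima question to the smooth case and to the known answers for the compact groups and for $\GL(n_i,\CC)$. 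Without this reduction the identification $\N=\{\varphi=0\}$ is established only on the smooth locus, which is not enough to conclude via Proposition 4.1.
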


We adopt the Morse-theoretic approach pioneered by Hitchin in
\cite{hitchin:1987}, and which has already been applied for several other groups (see, for example, \cite{hitchin:1992,gothen:2001,bradlow-garcia-prada-gothen:2003,bradlow-garcia-prada-gothen:2004,garcia-prada-mundet:2004,bradlow-garcia-prada-gothen:2005,garcia-gothen-mundet:2008 II}), to reduce our problem to the study of connectedness of certain subvarieties of $\M_{\U^*(2n)}$. For that, we obtain first  a detailed description of smooth points of the moduli space $\M_{\U^*(2n)}$, then 
we give also an explicit description of stable and non-simple $\U^*(2n)$-Higgs bundles, 
and show how the polystable $\U^*(2n)$-Higgs bundles split as a direct sum of stable objects.

Non-abelian Hodge theory on $X$ establishes a homeomorphism between $\M_G$ and the moduli space of reductive representations of $\pi_1X$ in $G$ (cf. \cite{hitchin:1987,simpson:1988,simpson:1992,donaldson:1987,corlette:1988,garcia-gothen-mundet:2008,bradlow-garcia-prada-mundet:2003}). A direct consequence of our result is thus the following.
\begin{theorem*}
The moduli space of reductive representations of $\pi_1X$ in $\U^*(2n)$ is connected.
\end{theorem*}

The connectedness of $\M_{\U^*(2n)}$ reflects the fact that $\U^*(2n)$ is simply-connected. 
It seems plausible that, like for $\M_{\U^*(2n)}$, $\M_G$ is connected whenever $G$ is a real reductive Lie group with $\pi_1G=0$. When $G$ is complex this has been proved by Hitchin \cite{hitchin:1987}
for $\SL(2,\CC)$ and by Simpson \cite{simpson:1988} for $\SL(n,\CC)$. For general complex $G$, the
result follows from a theorem by Li \cite{li:1993}, showing the analogous result for the moduli space of flat $G$-connections,  and the non-abelian Hodge theory correspondence.
As far as we know, there is no proof in general using Higgs bundle techniques.

\section{$\U^*(2n)$-Higgs bundles}\label{section:U*(2n)-Higgs bundles}

Let $X$ be a compact Riemann of genus $g\geq 2$, and let $G$ a real reductive Lie group.
Let $H\subseteq G$ be a maximal compact subgroup
and $H^\CC\subseteq G^\CC$ their complexifications. Let
$$\lieg=\lieh\oplus\liem$$ be a Cartan
decomposition of $\lieg$,
where $\liem$ is the complement of $\lieh$ with
respect to the non-degenerate $\Ad(G)$-invariant bilinear form on $\lieg$. If
$\theta:\lieg\to\lieg$ is the
corresponding Cartan involution then $\lieh$ and
$\liem$ are its $+1$-eigenspace and $-1$-eigenspace,
respectively. Complexifying, we have the decomposition
$$\liegc=\liehc\oplus\liemc$$ and
$\liemc$ is a representation of $H^\CC$ through the
so-called \emph{isotropy representation}
\begin{equation}\label{complex isotropy representation}
 \iota:H^\CC\longrightarrow\Aut(\liemc)
\end{equation}
which is induced by the adjoint
representation of $G^\CC$ on $\liegc$.
If $E_{H^\CC}$ is a principal $H^{\CC}$-bundle over $X$, we denote by
$E_{H^\CC}(\liemc)=E_{H^\CC}\times_{H^{\CC}}\liemc$ the
vector bundle, with fiber $\liemc$, associated to the
isotropy representation.

Let $K=T^*X^{1,0}$ be the canonical line bundle of $X$.
\begin{definition}\label{definition of Higgs bundle}
Let $X$ be a compact Riemann surface of genus $g\geq 2$. A \emph{$G$-Higgs bundle} over $X$ is a pair
$(E_{H^\CC},\varphi)$ where $E_{H^\CC}$ is a principal holomorphic $H^\CC$-bundle over
$X$ and $\varphi$ is a global holomorphic section of
$E_{H^\CC}(\liemc)\otimes K$, called the \emph{Higgs field}.
\end{definition}

Higgs bundles were introduced by Hitchin \cite{hitchin:1987} on compact Riemann surfaces and by Simpson \cite{simpson:1992} on any compact Kähler manifold.

\begin{example}\label{examples of G-Higgs bundles}\mbox{}
\begin{enumerate}
\item If $G$ is compact, a $G$-Higgs bundle is simply a holomorphic $G^\CC$-principal bundle. For instance, a $\U(n)$-Higgs bundle is simply a holomorphic $\GL(n,\CC)$-principal bundle over $X$ or, in terms of holomorphic vector bundles, a $\U(n)$-Higgs bundle is a rank $n$ holomorphic vector bundle.
\item  If $G$ is complex with maximal compact $H$, we have that $H^\CC=G$ and $\liem=\sqrt{-1}\lieh$, so $\liemc=\lieg$. Thus a $G$-Higgs bundle is 
a pair $(E_G,\varphi)$ where $E_G$ is a holomorphic $G$-bundle and $\varphi\in H^0(E_G(\lieg)\otimes K)$ where $E_G(\lieg)$ denotes the adjoint bundle of $E_G$, obtained from $E_G$ under the adjoint action of $G$ on $\lieg$: $E_G(\lieg)=E_G\times_{\Ad}\lieg$. As an example, a $\GL(m,\CC)$-Higgs bundle is, in terms of vector bundles, a pair $(V,\varphi)$ with $V$ a holomorphic rank $m$ vector bundle and $\varphi\in H^0(\End(V)\otimes K)$.
\end{enumerate}
\end{example}

Let us now consider the case of the real Lie group $\U^*(2n)$. A possible way to realize the group $\U^*(2n)$ as a matrix group is as the subgroup of $\GL(2n,\CC)$ defined as
$$\U^*(2n)=\left\{M\in\GL(2n,\CC)\st\overline  MJ_n=J_nM\right\},$$ where
$$J_n=\begin{pmatrix}
     0 & I_n \\
     -I_n & 0
\end{pmatrix}.$$
From this definition, it is obvious that $\U^*(2n)$ is the real form of $\GL(2n,\CC)$ given by the fixed point set of the involution $\sigma:\GL(2n,\CC)\to\GL(2n,\CC)$, $\sigma(M)=J_n^{-1}\overline{M}J_n$.

\begin{remark}
$\U^*(2n)$ is also the group of quaternionic linear automorphisms of an $n$-dimensional vector space over the ring $\mathbb{H}$ of quaternions, and therefore $\U^*(2n)$ is also denoted by $\GL(n,\mathbb{H})$.
\end{remark}

A maximal compact subgroup of $\U^*(2n)$ is the compact symplectic group $\Sp(2n)$ (or, equivalently, the group of $n\times n$ quaternionic unitary matrices), whose complexification is $\Sp(2n,\CC)$, the complex symplectic group.

The corresponding Cartan decomposition of the complex Lie algebras is 
$$\mathfrak{gl}(2n,\CC)=\mathfrak{sp}(2n,\CC)\oplus\liemc,$$
where $\liemc=\{A\in\mathfrak{gl}(2n,\CC)\st A^tJ_n=J_nA\}$. Hence:

\begin{definition}
A $\U^*(2n)$-Higgs bundle over $X$ is a pair $(E,\varphi)$, where $E$ is a holomorphic $\Sp(2n,\CC)$-principal bundle and the Higgs field $\varphi$ is a global holomorphic section of $E\times_{\Sp(2n,\CC)}\liemc\otimes K$.
\end{definition}
 Now, if $\mathbb{W}$ is the standard $2n$-dimensional complex representation of $\Sp(2n,\CC)$ and $\Omega$ denotes the standard symplectic form on $\mathbb{W}$, then the isotropy representation space is $$\liemc=S^2_{\Omega}\mathbb{W}=\{\xi\in\End(\mathbb{W})\,|\,\Omega(\xi\cdot,\cdot)=\Omega(\cdot,\xi\cdot)\}\subset\End(\mathbb{W}).$$ 

Given a symplectic vector bundle $(W,\Omega)$, denote by $S^{2}_{\Omega}W$ the bundle of endomorphisms $\xi$ of $W$ which are symmetric with respect to $\Omega$ i.e. such that $\Omega(\xi\,\cdot,\cdot)=\Omega(\cdot,\xi\,\cdot)$. In the vector bundle language, we have hence the following:
\begin{definition}
A \emph{$\U^*(2n)$-Higgs bundle} over $X$ is a triple $(W,\Omega,\varphi)$,
where $W$ is a holomorphic vector bundle of rank $2n$, $\Omega\in H^0(\Lambda^2W^*)$ is a symplectic form on $W$ and the Higgs field
$\varphi\in H^0(S_{\Omega}^2 W\otimes K)$ is a $K$-twisted endomorphism $W\to W\otimes K$, symmetric with respect to $\Omega$. 
\end{definition}

Given the symplectic form $\Omega$, we have the usual skew-symmetric isomorphism $$\omega:W\stackrel{\cong}{\longrightarrow}W^*$$ given by $$\omega(v)=\Omega(v,-).$$ It follows from the symmetry of $\varphi$ with respect to $\Omega$ that
\begin{equation}\label{symmetric higgs field}
(\varphi^t\otimes 1_K)\omega=(\omega\otimes 1_K)\varphi.
\end{equation}

\begin{remark}\label{equivalent definition of U*(2n)}
Given a $\U^*(2n)$-Higgs bundle $(W,\Omega,\varphi)$, define the homomorphism $$\tilde\varphi:W^*\longrightarrow W\otimes K$$  by
\begin{equation}\label{tildevarphi}
\tilde\varphi=\varphi\omega^{-1}.
\end{equation}
It follows from (\ref{symmetric higgs field}) that $\tilde\varphi$ is skew-symmetric i.e.
$$\tilde\varphi^t\otimes 1_K=-\tilde\varphi.$$ In other words, $$\tilde\varphi\in H^0(\Lambda^{2}W\otimes K).$$
Hence, since $\omega:W\to W^*$ is an isomorphism, \emph{it is equivalent to think of a $\U^*(2n)$-Higgs bundle as a triple $(W,\Omega,\varphi)$ with $\varphi\in H^0(S_{\Omega}^2W\otimes K)$ or as a triple $(W,\Omega,\tilde\varphi)$ with $\tilde\varphi\in H^0(\Lambda^2W\otimes K)$}.
\end{remark}

Given a $\U^*(2n)$-Higgs bundle $(W,\Omega,\varphi)$, we must then have $W\cong W^*$, thus $$\deg(W)=0.$$ In other words, the topological invariant of these objects given by the degree is always zero. This is of course consequence of the fact that the group $\U^*(2n)$ is connected and simply-connected and that, for $G$ connected, $G$-Higgs bundles are topologically classified (cf. \cite{ramanathan:1975}) by the elements of $\pi_1G$.

\begin{remark}
 Two $G$-Higgs bundles $(E_{H^\CC},\varphi)$ and $(E_{H^\CC}',\varphi')$ over $X$ are \emph{isomorphic} if there is an holomorphic isomorphism $f:E_{H^\CC}\to E_{H^\CC}'$ such that $\varphi'=\tilde f(\varphi)$, where $\tilde f\otimes 1_K:E_{H^\CC}(\liemc)\otimes K\to E_{H^\CC}'(\liemc)\otimes K$ is the map induced from $f$ and from the isotropy representation $H^\CC\to\Aut(\liemc)$.
Hence, two $\U^*(2n)$-Higgs bundles $(W,\Omega,\varphi)$ and
$(W',\Omega',\varphi')$ are \emph{isomorphic} if there is an isomorphism $f:W\to W'$ 
such that $\omega=f^t\omega'f$ and $\varphi' f=(f\otimes 1_K)\varphi$, which is equivalent to $\tilde\varphi'=(f\otimes 1_K)\tilde\varphi f^t$ where $\tilde\varphi$ is given in (\ref{tildevarphi}).
\end{remark}

\section{Moduli spaces}
\subsection{Stability conditions}
Now we consider the moduli space of $G$-Higgs bundles, for which we need the notions of stability, semistability and polystability.

In order to find these notions for $\U^*(2n)$-Higgs bundles, we briefly recall here the main
definitions. The main reference is \cite{garcia-gothen-mundet:2008}, where the general notion of (semi,poly)stability is deduced in detail and where several examples are studied. Let $E_{H^\CC}$ be a principal $H^\CC$-bundle. Let $\Delta$ be  a
fundamental system of roots of the Lie algebra $\lieh^\CC$. For every subset
$A\subseteq \Delta$
there is a corresponding parabolic subgroup $P_A\subseteq H^\CC$. 
Let $\chi:\mathfrak{p}_A\to\CC$ be an antidominant character of $\mathfrak{p}_A$, the Lie algebra of $P_A$.
Let $\sigma$ be a holomorphic section of $E_{H^\CC}(G/P_A)$, that is, a
reduction of the structure group of $E_{H^\CC}$ to $P_A$, and denote by
$E_{\sigma}$ the corresponding $P_A$-bundle. We define the \emph{degree} of
$E_{H^\CC}$ with respect to $\sigma$ and $\chi$ by
$$\deg E_{H^\CC}(\sigma,\chi)=\deg\chi_{*}E_{\sigma}.$$
When $\chi$ lifts to a character $P_A\to\CC^*$, then the degree of $E_{H^\CC}$ is written in terms of the degree of the line bundle obtained from $E_{\sigma}$ and from the character $P_A\to\CC^*$. When $\chi$ does not lift, the degree of $E_{H^\CC}$ is also the degree of a certain line bundle obtained also from $E_{\sigma}$ and $\chi$. There is also an alternative definition of degree, in terms of Chern-Weil theory. The detailed definitions of degree can be found in Sections 2.3-2.6 of \cite{garcia-gothen-mundet:2008}.

Let $\iota:H^\CC\rightarrow \Aut(\liem^\CC)$ be
the isotropy representation. We define
\begin{align*}
&\liem_{\chi}^-=\{v\in \liem^\CC\ :\ \iota(e^{ts_{\chi}})v
\text{ is bounded as } t\to\infty\}\\
&\liem^0_{\chi}=\{v\in \liem^\CC\ :\ \iota(e^{ts_{\chi}})v=v\;\;
\mbox{for every} \;\; t\}.
\end{align*}

Let $\la\cdot,\cdot\ra$ be an invariant $\CC$-bilinear pairing on
$\liehc$.

Here is the general definition of semistability, given in \cite{garcia-gothen-mundet:2008}. It depends on a parameter $\alpha\in \sqrt{-1}\lieh\cap\liez$, where $\liez$ is the center of $\liehc$.
\begin{definition}\label{semistability definition}
Let $\alpha\in\sqrt{-1}\lieh\cap\liez$. A $G$-Higgs bundle $(E_{H^\CC},\varphi)$ is:
\begin{itemize}
 \item \emph{$\alpha$-semistable}
if $$\deg E_{H^\CC}(\sigma,\chi)-\langle\alpha,\chi\rangle\geq 0,$$ for any parabolic subgroup $P_A$ of $H^\CC$, any antidominant
character $\chi$ of $\mathfrak{p}_A$
and any reduction of structure group $\sigma$ of $E_{H^\CC}$ to $P_A$
such that $\varphi\in H^0(E_{\sigma}(\liem_{\chi}^-)\otimes K)$, where $E_{\sigma}$ is the corresponding $P_{A}$-bundle.
\item \emph{$\alpha$-stable} if it is $\alpha$-semistable, and $$\deg
E_{H^\CC}(\sigma,\chi)-\langle\alpha,\chi\rangle>0,$$ for any $P_A$,
$\chi$ and $\sigma$ as above such that $\varphi\in
H^0(E_{\sigma}(\liem_{\chi}^-)\otimes K)$, $A\not=\emptyset$ and $\chi\not\in(\lieh\cap\liez)^*$.
\item \emph{$\alpha$-polystable} if it is $\alpha$-semistable, and for each
$P_A$, $\sigma$ and $\chi$ as above such that $$\deg E_{H^\CC}(\sigma,\chi)-\langle\alpha,\chi\rangle=0,$$ there exists a
holomorphic reduction of the structure group,
$\sigma_L\in H^0(E_{\sigma}(P_A/L_A))$, of $E_{\sigma}$ to the
Levi subgroup $L_A$ of $P_A$, such that $\varphi\in H^0(E_{\sigma_L}(\liem^0_{\chi})\otimes K)$, where $E_{\sigma_L}$ is the corresponding $L_A$-bundle.
\end{itemize}
\end{definition}

\subsubsection{$\GL(n,\CC)$-Higgs bundles}
Recall from Example \ref{examples of G-Higgs bundles} that a $\GL(n,\CC)$-Higgs bundle is a pair $(V,\varphi)$ where $V$ is a rank $n$ vector bundle and $\varphi\in H^0(\End(V)\otimes K)$.
A subbundle $V'$ of $V$ is said to be \emph{$\varphi$-invariant} if $\varphi(V')\subset V'\otimes K$.

The \emph{slope} of a vector bundle $V$ is defined by the quotient $\mu(V)=\deg(V)/\rk(V)$, where $\rk(V)$ denotes the rank of $V$.

It can be seen that, when applied to $\GL(n,\CC)$-Higgs bundles, the (semi,poly)stability condition of Definition \ref{semistability definition}, simplifies as follows:
\begin{proposition}\label{(poly)stability for usual Higgs bundles}
A $\GL(n,\CC)$-Higgs bundle $(V,\varphi)$ is:
\begin{itemize}
\item \emph{semistable} if and only if $\mu(V')\leq\mu(V)$ for every proper $\varphi$-invariant subbundle $V'\subsetneq V$.
\item \emph{stable} if and only if $\mu(V')<\mu(V)$ for every proper $\varphi$-invariant subbundle $V'\subsetneq V$.
\item \emph{polystable} if and only if it is semistable and, for every proper $\varphi$-invariant subbundle $V'\subsetneq V$ such that $\mu(V')=\mu(V)$, there is another proper $\varphi$-invariant subbundle $V''\subsetneq V$ such that $\mu(V'')=\mu(V)$ and $V=V'\oplus V''$.
\end{itemize}
\end{proposition}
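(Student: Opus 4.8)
The plan is to unwind the abstract stability conditions of Definition \ref{semistability definition} in the special case $G=\GL(n,\CC)$, where $H^\CC=\GL(n,\CC)$, a principal $H^\CC$-bundle is the frame bundle of a rank $n$ vector bundle $V$, and the isotropy representation $\iota$ is the adjoint action of $\GL(n,\CC)$ on $\liemc=\gl(n,\CC)=\End(V)$. First I would record the dictionary between the group-theoretic data and the vector-bundle data. Parabolic subgroups $P_A\subseteq\GL(n,\CC)$ are the stabilizers of flags in $\CC^n$, and a reduction $\sigma$ of the structure group of $E_{H^\CC}$ to $P_A$ is exactly a filtration by subbundles
$$0\subsetneq V_1\subsetneq V_2\subsetneq\cdots\subsetneq V_r=V.$$
An antidominant character $\chi$ of $\mathfrak{p}_A$ is encoded by a weakly increasing tuple of real weights attached to the graded pieces $V_i/V_{i-1}$, and I would take the stability parameter to be $\alpha=0$, so that the term $\langle\alpha,\chi\rangle$ drops out of the inequalities.

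The second step is the two computations that drive everything. On the one hand, I would identify $\liem_{\chi}^-$ for the adjoint representation as the sum of the nonpositive weight spaces, i.e.\ the endomorphisms preserving the filtration; translating the condition $\varphi\in H^0(E_\sigma(\liem_{\chi}^-)\otimes K)$ into bundle language then says precisely that $\varphi(V_i)\subseteq V_i\otimes K$, that is, each $V_i$ is $\varphi$-invariant. On the other hand, I would compute $\deg E_{H^\CC}(\sigma,\chi)$ in terms of the filtration and the weights. For the elementary case of a single proper subbundle $V'\subsetneq V$, using the two-step flag $0\subsetneq V'\subsetneq V$ with the (essentially unique, up to positive scale) antidominant character orthogonal to the center, this degree works out to a positive multiple of $\rk(V')\rk(V)\big(\mu(V)-\mu(V')\big)$. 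Hence the inequality $\deg E_{H^\CC}(\sigma,\chi)\geq 0$ becomes exactly $\mu(V')\leq\mu(V)$, and strict inequality corresponds to stability; the exclusion $\chi\notin(\lieh\cap\liez)^*$ in the stable case is what restricts attention to proper subbundles, the central characters corresponding to $V'=V$.

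The final step reduces arbitrary weighted filtrations to single subbundles and handles polystability. I would write the degree attached to a general flag with antidominant weights as a nonnegative linear combination of the elementary quantities $\rk(V_i)\rk(V)\big(\mu(V)-\mu(V_i)\big)$; since the coefficients (the successive weight gaps) are nonnegative and otherwise arbitrary, the family of inequalities over all flags is equivalent to the family over all single $\varphi$-invariant subbundles, giving the semistability and stability statements. For polystability, the Levi subgroup $L_A$ corresponds to a splitting $V=\bigoplus_i V_i/V_{i-1}$ and $\liem^{0}_{\chi}$ is the block-diagonal (weight-zero) part, so the required reduction $\sigma_L$ with $\varphi\in H^0(E_{\sigma_L}(\liem^{0}_{\chi})\otimes K)$ means $\varphi$ respects the splitting; specialized to a maximal-slope $\varphi$-invariant subbundle $V'$, this yields a $\varphi$-invariant complement $V''$ with $V=V'\oplus V''$, as claimed. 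I expect the main obstacle to be neither conceptual step but the bookkeeping in the second step: fixing the sign and normalization conventions for antidominant characters and for the degree $\deg E_{H^\CC}(\sigma,\chi)$ so that $\liem_{\chi}^-$ really corresponds to $\varphi$-invariance of $V_i$ (and not of the quotient) and so that the degree comes out with the correct sign, together with verifying carefully that the reduction to single subbundles loses no information.
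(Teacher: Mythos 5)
The paper offers no proof of this proposition: it is quoted as a known simplification of Definition \ref{semistability definition} for $G=\GL(n,\CC)$, with \cite{garcia-gothen-mundet:2008} as the reference, so there is no argument of the authors' own to compare yours against directly. That said, your outline is the standard unwinding and its structure closely parallels the proof the paper \emph{does} give for the analogous statement for $\U^*(2n)$ (Proposition \ref{prop:orthogonal-stability}): identify $\liem_{\chi}^-$ with the filtration-preserving endomorphisms, so that $\varphi\in H^0(E_{\sigma}(\liem_{\chi}^-)\otimes K)$ becomes $\varphi$-invariance of the subbundles sitting at the jumps of the weight vector; write the degree of a general weighted flag as a nonnegative combination of the elementary quantities $\rk(V_i)\rk(V)\bigl(\mu(V)-\mu(V_i)\bigr)$; and thereby reduce to single subbundles, with the Levi reduction giving the splitting in the polystable case. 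One point you should make explicit: your reduction to single subbundles relies on the fact that the weight gaps can be nonzero only at indices where $\varphi$-invariance is actually imposed -- this is precisely the role played by (\ref{eq:Lambda-JJJ}) and (\ref{eq:equivalent dgeq 0}) in the paper's $\U^*(2n)$ argument.

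The one genuine slip is the choice of parameter: you set $\alpha=0$, but for $\GL(n,\CC)$ the center of $\liehc$ is nontrivial and the correct value is $\alpha=\mu(V)$, as the paper's remark immediately after the proposition records. With $\alpha=0$, the semistability inequality applied to a central character $\chi=\pm\tr$ (central characters are \emph{not} excluded in the semistable case, only in the strict inequality defining stability) reads $\pm\deg V\geq 0$ and forces $\deg V=0$; so your argument as written establishes the proposition only for degree-zero bundles, whereas the statement is for arbitrary degree. The repair is harmless -- your elementary computation uses the traceless antidominant character, against which the central parameter $\alpha=\mu(V)$ pairs to zero, so every inequality you derive survives unchanged -- but the normalization must be stated correctly for the general statement to come out.
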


Notice that, on the last item of the previous proposition, $(V',\varphi|_{V'})$ and $(V'',\varphi|_{V''})$ must also be polystable (this holds due to the Hitchin-Kobayashi correspondence between polystable $\GL(n,\CC)$-Higgs bundles and solutions to the $\GL(n,\CC)$-Hitchin equations; see \cite{hitchin:1987}). So, an iteration procedure shows that a $\GL(n,\CC)$-Higgs bundle $(V,\varphi)$ is polystable if and only if $V=V_1\oplus\cdots\oplus V_k$, where $\varphi(V_i)\subset V_i\otimes K$ and $(V_i,\varphi|_{V_i})$ are stable $\GL(\rk(V_i),\CC)$-Higgs bundles with $\mu(V_i)=\mu(V)$ (cf. \cite{nitsure:1991}).

\begin{remark}
It can be seen that a $\GL(n,\CC)$-Higgs bundle $(V,\varphi)$ is $\alpha$-semistable if and only if $\alpha=\mu(V)$ and $\mu(V')\leq\mu(V)$ for all proper subbundle $V'\subsetneq V$ such that $\varphi(V')\subset V'\otimes K$. So, the parameter is  fixed by the topological type of the $\GL(n,\CC)$-Higgs bundle. Hence, $\alpha=\mu(V)$ is the value of the parameter which we are considering in the previous proposition.
\end{remark}

\subsubsection{$\U^*(2n)$-Higgs bundles}
Also the general definition of (semi,poly)stability for
$G$-Higgs bundles given above, simplifies in the case $G=\U^*(2n)$, as we shall now briefly explain. The main reference for this, and where this is done in detail for several groups, is again \cite{garcia-gothen-mundet:2008}. In order to state the stability condition for $\U^*(2n)$-Higgs bundles, we first introduce some notation.

For any filtration of vector bundles
$$\mathcal W=(0=W_0\subsetneq W_1\subsetneq W_2\subsetneq\dots\subsetneq W_k=W)$$
satisfying $W_{k-j} = W_j^{\perp_\Omega}$ (here $W_j^{\perp_\Omega}$
denotes the orthogonal complement of $W_j$ with respect to
$\Omega$), define
$$\Lambda(\mathcal W)=\{(\lambda_1,\lambda_2,\dots,\lambda_k)\in\R^k
\mid \lambda_i\leq \lambda_{i+1}\text{ and
}\lambda_{k-i+1}=-\lambda_i\text{ for any }i\}.$$ For each $\lambda=(\lambda_1,\lambda_2,\dots,\lambda_k)\in\Lambda(\mathcal W)$ consider the subbundle of $S^2_{\Omega}W\otimes K$ given by
\begin{equation}\label{eq:N(calW,lambda)}
N(\mathcal W,\lambda)=S^2_{\Omega}W\otimes K\cap\sum_{\lambda_j\leq\lambda_i} \Hom(W_i,W_j)\otimes K\subseteq S^2_{\Omega}W\otimes K
\end{equation}
and let
\begin{equation}\label{eq:d(calW,lambda)}
d(\mathcal W,\lambda) = \sum_{j=1}^{k-1}(\lambda_j-\lambda_{j+1})\deg W_j.
\end{equation}

According to \cite{garcia-gothen-mundet:2008} the stability conditions for a $\U^*(2n)$-Higgs bundle can now be
stated as follows.
\begin{proposition}\label{prop:u*2n-alpha-stability}
A $\U^*(2n)$-Higgs bundle
$(W,\Omega,\varphi)$ is:
\begin{itemize}
 \item\emph{semistable} if and only if $d(\mathcal W,\lambda)\geq 0$ for
  every filtration $\mathcal W$ as above and any
  $\lambda\in\Lambda(\mathcal W)$ such that
  $\varphi \in H^0(N(\mathcal W,\lambda))$.
\item\emph{stable} if and only if it is semistable and $d(\mathcal W,\lambda)>0$ for every choice of
  filtration $\mathcal W$ and any nonzero $\lambda\in\Lambda(\mathcal W)$ such that
  $\varphi \in H^0(N(\mathcal W,\lambda))$.
\item\emph{polystable} if and only if it is
 semistable and, for every filtration $\mathcal W$ as above and any
 $\lambda\in\Lambda(\mathcal W)$ satisfying $\lambda_i<\lambda_{i+1}$
 for each $i$, such that $\varphi\in H^0(N(\mathcal W,\lambda))$ and
 $d(\mathcal W,\lambda)=0$, there is an isomorphism
 $$W\simeq W_1\oplus W_2/W_1\oplus\dots\oplus W_k/W_{k-1}$$
 such that $$\Omega(W_i/W_{i-1},W_j/W_{j-1})=0,\ \text{ unless }\ i+j=k+1.$$
 Furthermore, via this isomorphism,
 $$\varphi\in H^0\bigg(\bigoplus_i\End(W_i/W_{i-1})\otimes K\bigg).$$
\end{itemize}
\end{proposition}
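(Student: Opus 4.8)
The plan is to specialize the general criterion of Definition \ref{semistability definition} to $G=\U^*(2n)$, for which $H^\CC=\Sp(2n,\CC)$ and $\liemc=S^2_\Omega\mathbb{W}$. Since $\Sp(2n,\CC)$ is semisimple its center is trivial, so $\liez=0$ and the parameter is forced to be $\alpha=0$; the term $\la\alpha,\chi\ra$ then disappears and all three conditions become sign conditions on $\deg E_{H^\CC}(\sigma,\chi)$ alone. The first task is to set up the dictionary between the group-theoretic data $(P_A,\chi,\sigma)$ and the bundle-theoretic data $(\mathcal W,\lambda)$. Parabolic subgroups of $\Sp(2n,\CC)$ are the stabilizers of isotropic flags in the standard representation $\mathbb{W}$, so a reduction $\sigma$ of $E_{H^\CC}$ to $P_A$ is exactly a filtration $\mathcal W$ of $W$ whose lower half is isotropic and whose upper half is recovered by symplectic orthogonality; this is precisely the constraint $W_{k-j}=W_j^{\perp_\Omega}$. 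I would make this explicit by recalling the root data of $\mathfrak{sp}(2n,\CC)$ and matching the subset $A\subseteq\Delta$ with the jumps of the flag.

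Next I would translate the character and the dominance condition. An antidominant character $\chi$ of $\mathfrak{p}_A$ is determined by an element $s_\chi\in\sqrt{-1}\lieh$ acting on each graded piece $W_i/W_{i-1}$ as a real scalar $\lambda_i$; antidominance forces $\lambda_i\leq\lambda_{i+1}$, while membership of $s_\chi$ in $\mathfrak{sp}(2n,\CC)$ forces $\lambda_{k-i+1}=-\lambda_i$, so the admissible characters are exactly the elements of $\Lambda(\mathcal W)$. The core of the computation is then the weight analysis of the isotropy action on $\liemc=S^2_\Omega\mathbb{W}$. A homomorphism in the block $\Hom(W_i,W_j)$ has weight $\lambda_j-\lambda_i$ under $\iota(e^{ts_\chi})$, so it lies in $\liem_\chi^-$ precisely when $\lambda_j\leq\lambda_i$ and in $\liem^0_\chi$ precisely when $\lambda_j=\lambda_i$. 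Intersecting with the $\Omega$-symmetric endomorphisms and twisting by $K$ identifies $E_\sigma(\liem_\chi^-)\otimes K$ with the subbundle $N(\mathcal W,\lambda)$ of (\ref{eq:N(calW,lambda)}), so the hypothesis $\varphi\in H^0(E_\sigma(\liem_\chi^-)\otimes K)$ becomes $\varphi\in H^0(N(\mathcal W,\lambda))$.

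It then remains to evaluate the degree. Computing $\deg E_{H^\CC}(\sigma,\chi)$ as the degree of the line bundle obtained from $E_\sigma$ and $\chi$ gives $\sum_i\lambda_i\deg(W_i/W_{i-1})$, and a summation by parts, using $\deg W_0=\deg W_k=0$ (recall $\deg W=0$), rewrites this as $d(\mathcal W,\lambda)=\sum_{j=1}^{k-1}(\lambda_j-\lambda_{j+1})\deg W_j$. Feeding these translations into the three bullets of Definition \ref{semistability definition} yields the semistable and stable criteria verbatim, where the extra requirement $\chi\notin(\lieh\cap\liez)^*$ in the stable case becomes simply $\lambda\neq 0$ since $\liez=0$. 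For polystability, a reduction $\sigma_L$ of $E_\sigma$ to the Levi $L_A$ of $P_A$ is a splitting $W\simeq\bigoplus_i W_i/W_{i-1}$ compatible with $\Omega$; as the Levi of a symplectic parabolic pairs $W_i/W_{i-1}$ with $W_{k-i+1}/W_{k-i}$, this is the stated condition $\Omega(W_i/W_{i-1},W_j/W_{j-1})=0$ unless $i+j=k+1$, and for $\lambda$ strictly increasing the zero-weight space $\liem^0_\chi$ is the block-diagonal part, giving $\varphi\in H^0(\bigoplus_i\End(W_i/W_{i-1})\otimes K)$.

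The main obstacle I expect is the weight bookkeeping for $S^2_\Omega\mathbb{W}$: one must check that the symmetry constraint $\Omega(\xi\,\cdot,\cdot)=\Omega(\cdot,\xi\,\cdot)$ is compatible with the weight decomposition (it pairs the $(i,j)$ block with the $(k-j+1,k-i+1)$ block) and that intersecting with it does not alter which blocks survive the sign conditions $\lambda_j\leq\lambda_i$ and $\lambda_j=\lambda_i$. Once this compatibility is verified, the identifications of $N(\mathcal W,\lambda)$ and of the block-diagonal piece with $\liem_\chi^-$ and $\liem^0_\chi$ are routine, and the remaining degree computation is the summation by parts above.
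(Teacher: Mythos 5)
Your proposal is correct and follows exactly the route the paper intends: the paper states this proposition without a written proof, citing the general stability framework of \cite{garcia-gothen-mundet:2008}, and your specialization (trivial center forcing $\alpha=0$, parabolics of $\Sp(2n,\CC)$ as isotropic flag stabilizers, antidominant characters as elements of $\Lambda(\mathcal W)$, the weight analysis identifying $E_\sigma(\liem_\chi^-)\otimes K$ with $N(\mathcal W,\lambda)$, and the summation by parts giving $d(\mathcal W,\lambda)$) is precisely the computation that citation stands for. No gaps; your final compatibility check of the $\Omega$-symmetry constraint with the weight decomposition is the right point to verify.
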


\begin{remark}
The center of $\mathfrak{sp}(2n,\CC)$ is trivial hence, in the case of $G=\U^*(2n)$, the only possibility for the value of the parameter $\alpha$ of Definition \ref{semistability definition} is $\alpha=0$. So this is the value of $\alpha$ that we are considering in the previous proposition.
\end{remark}

There is a simplification of the stability condition for $\U^*(2n)$-Higgs bundles analogous to the cases considered in \cite{garcia-gothen-mundet:2008}. Recall that a subbundle $W'$ of $W$ is $\varphi$-invariant if $\varphi(W')\subset W'\otimes K$ i.e. $\varphi|_{W'}\in H^0(\End(W')\otimes K)$.

\begin{proposition}\label{prop:orthogonal-stability}
  A $\U^*(2n)$-Higgs bundle $(W,\Omega,\varphi)$
  is:
\begin{itemize}
 \item semistable if and only if $  \deg W'\leq 0$ for any isotropic and $\varphi$-invariant subbundle $W'\subset W$.
 \item stable if and only if it is semistable and $\deg W'<0$ for any isotropic and $\varphi$-invariant strict subbundle $0\neq W' \subset W$.
 \item polystable if and only if it is semistable and, for any isotropic (resp. coisotropic) and $\varphi$-invariant strict
  subbundle $0\neq W'\subset W$ such that $\deg W'=0$, there is another coisotropic (resp. isotropic) and $\varphi$-invariant subbundle $0\neq W''\subset W$ such that $W\cong W'\oplus W''$.
\end{itemize}
\end{proposition}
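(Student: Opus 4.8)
The plan is to derive each of the three conditions directly from the filtration criterion of Proposition~\ref{prop:u*2n-alpha-stability}, exploiting the fact that the data $(\mathcal W,\lambda)$ entering that criterion is tightly constrained by the symmetry $W_{k-j}=W_j^{\perp_\Omega}$ together with $\lambda_{k-i+1}=-\lambda_i$. I would begin by recording two elementary facts. First, since $\omega:W\stackrel{\cong}{\to}W^*$ and $\deg W=0$, one has $\deg W_j^{\perp_\Omega}=\deg W_j$ for every subbundle $W_j$. Second, if $\varphi\in H^0(N(\mathcal W,\lambda))$ then, reading the definition \eqref{eq:N(calW,lambda)} through the grading in which $W_i/W_{i-1}$ carries weight $\lambda_i$, the component of $\varphi$ from $W_i/W_{i-1}$ to $W_j/W_{j-1}$ vanishes unless $\lambda_j\le\lambda_i$; in particular $W_j$ is $\varphi$-invariant at every \emph{strict} jump $\lambda_j<\lambda_{j+1}$.

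Next I would massage $d(\mathcal W,\lambda)$. The coefficient $\lambda_j-\lambda_{j+1}$ in \eqref{eq:d(calW,lambda)} is $\le 0$ and is nonzero only at strict jumps, where $W_j$ is therefore $\varphi$-invariant. Pairing the index $j$ with $k-j$ and using $\lambda_{k-j}-\lambda_{k-j+1}=\lambda_j-\lambda_{j+1}$ together with $\deg W_{k-j}=\deg W_j$, the sum collapses to $d(\mathcal W,\lambda)=2\sum_{2j<k}(\lambda_j-\lambda_{j+1})\deg W_j$, with an extra Lagrangian middle term when $k$ is even. For the indices surviving here one has $W_j\subseteq W_j^{\perp_\Omega}$, i.e.\ $W_j$ is isotropic. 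This already yields the forward implications: under the subbundle hypotheses each surviving $\deg W_j$ is $\le 0$ (resp.\ $<0$) and each coefficient is $\le 0$, so every summand is $\ge0$ (resp.\ $>0$), giving $d\ge0$ (resp.\ $d>0$); in the stable case one notes that $\lambda\neq0$ forces a strict jump at some $j$ with $2j\le k$, producing a genuinely proper, nonzero isotropic $\varphi$-invariant $W_j$ of strictly negative degree.

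For the reverse implications in the semistable and stable cases I would turn a single destabilizing subbundle into a filtration. Given an isotropic $\varphi$-invariant $W'$, the $\Omega$-symmetry of $\varphi$ (equation \eqref{symmetric higgs field}) shows that $W'^{\perp_\Omega}$ is also $\varphi$-invariant, so the three-step filtration $0\subset W'\subset W'^{\perp_\Omega}\subset W$ (or $0\subset W'\subset W$ when $W'$ is Lagrangian) with weights $(-1,0,1)$ satisfies $\varphi\in H^0(N(\mathcal W,\lambda))$, and the computation above gives $d(\mathcal W,\lambda)=-2\deg W'$. Hence filtration-(semi)stability forces $\deg W'\le0$ (resp.\ $<0$), as required.

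The genuinely delicate part is polystability. In the forward direction I would feed the same three-step filtration, now with $\deg W'=0$ and the strictly increasing weights $(-1,0,1)$, into the polystability clause of Proposition~\ref{prop:u*2n-alpha-stability}; this produces an isomorphism $W\cong W'\oplus W'^{\perp_\Omega}/W'\oplus W/W'^{\perp_\Omega}$ that is $\Omega$-orthogonal in the pattern $i+j=k+1$ and in which $\varphi$ is block-diagonal, and one checks that $W'':=W'^{\perp_\Omega}/W'\oplus W/W'^{\perp_\Omega}$ is then a coisotropic, $\varphi$-invariant complement to $W'$ (the coisotropic-to-isotropic case being the $\Omega$-dual statement). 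The converse is the main obstacle: starting from an arbitrary filtration with strict weights and $d(\mathcal W,\lambda)=0$, the summand-by-summand vanishing of $d$ forces $\deg W_j=0$ for every isotropic step $W_j$, and I would then apply the subbundle splitting repeatedly to peel off the pieces $W_1\subset W_2\subset\cdots$. The difficulty is bookkeeping: one must arrange the successive complements so that they are mutually $\Omega$-compatible and $\varphi$-invariant, and so that they reassemble into the single graded decomposition $W\cong\bigoplus_i W_i/W_{i-1}$ with $\Omega(W_i/W_{i-1},W_j/W_{j-1})=0$ unless $i+j=k+1$ and with $\varphi$ block-diagonal. I expect this induction, rather than any single estimate, to be the technical heart of the proof.
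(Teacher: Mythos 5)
Your reduction of the semistable and stable clauses to Proposition~\ref{prop:u*2n-alpha-stability} is correct and is essentially the paper's own argument: your pairing identity $d(\mathcal W,\lambda)=2\sum_{2j<k}(\lambda_j-\lambda_{j+1})\deg W_j$ (plus the Lagrangian middle term for $k$ even) carries exactly the same content as the paper's description of the cone $\Lambda(\mathcal W,\varphi)$ as the positive span of the extreme rays $L_j=-\sum_{c\le j}e_c+\sum_{d\ge k-j+1}e_d$, which reduces everything to $d(\mathcal W,L_j)=-2\deg W_j$. The two supporting observations you isolate --- that $\varphi\in H^0(N(\mathcal W,\lambda))$ forces $\varphi$-invariance of $W_j$ precisely at the strict jumps, and that $W_j^{\perp_\Omega}$ inherits $\varphi$-invariance from $W_j$ via the $\Omega$-symmetry \eqref{symmetric higgs field} --- are both established in the paper in the same way, and your treatment of the direction ``filtration-polystable $\Rightarrow$ subbundle condition'' via the three-step filtration $0\subset W'\subset W'^{\perp_\Omega}\subset W$ is also the paper's.

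The one place you stop short is the direction you yourself flag as the obstacle: subbundle condition $\Rightarrow$ filtration-polystability. You describe the correct strategy but do not carry out the induction, and a complete proof does require it to be written. The paper's resolution is exactly the one you sketch and uses nothing beyond what you already have: strictness of the weights forces $\mathcal J=\{1,\dots,k\}$, so every $W_j$ is $\varphi$-invariant; $d(\mathcal W,\lambda)=0$ together with your term-by-term nonnegativity forces $\deg W_i=0$ for every $i\le[k/2]$, hence for every $i$ by $\deg W_{k-j}=\deg W_j$; the hypothesis is then applied to each isotropic $W_i$ ($i\le[k/2]$) and each coisotropic $W_i$ ($i>[k/2]$) to produce splittings $W\cong W_i\oplus W/W_i$, which are assembled into $W\cong\bigoplus_i W_i/W_{i-1}$; finally the orthogonality pattern $\Omega(W_i/W_{i-1},W_j/W_{j-1})=0$ unless $i+j=k+1$ is read off from $W_{k-j}=W_j^{\perp_\Omega}$, and the block-diagonality of $\varphi$ from the invariance of every $W_j$ together with \eqref{symmetric higgs field}. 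You should know that the assembly step you worry about --- choosing the successive complements compatibly so that the individual splittings combine into a single graded decomposition --- is asserted rather than belabored in the paper as well; it is bookkeeping, not a missing idea, but it is the piece of the argument you still owe.
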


\begin{proof}
The proof follows \textit{mutatis mutandis} the proof of Theorem 4.4 of \cite{garcia-gothen-mundet:2008}. Take a
$\U^*(2n)$-Higgs bundle $(W,\Omega,\varphi)$, and assume
that $\deg W'\leq 0$ for any isotropic, $\varphi$-invariant, subbundle $W'\subset W$. We want to prove that $(W,\Omega,\varphi)$
is semistable. Suppose that $\varphi$ is nonzero, for otherwise the result follows from the usual characterization
 of (semi)stability for $\Sp(2n,\CC)$-principal bundles due to Ramanathan (see Remark 3.1 of \cite{ramanathan:1975}).

Choose any filtration $$\mathcal W=(0\subsetneq W_1\subsetneq W_2\subsetneq\dots\subsetneq W_k=W)$$ satisfying
$W_{k-j}=W_j^{\perp_{\Omega}}$ for any $j$, and consider the convex
set
\begin{equation}\label{eq:convex set}
\Lambda(\mathcal W,\varphi)= \{\lambda\in\Lambda(\mathcal W)\mid \varphi\in H^0(N(\mathcal W,\lambda))\}\subset\R^k,
\end{equation}
where $N(\mathcal W,\lambda)$ is defined in (\ref{eq:N(calW,lambda)}).

Let \begin{equation}\label{eq:JJJ}
\mathcal J=\{j\mid
\varphi(W_j)\subset W_j\otimes K\}= \{j_1,\dots,j_r\}.
\end{equation}
One checks easily that if $\lambda=(\lambda_1,\dots,\lambda_k)\in\Lambda(\mathcal W)$ then
\begin{equation}\label{eq:Lambda-JJJ}
\lambda\in\Lambda(\mathcal W,\varphi)
\Longleftrightarrow \lambda_a=\lambda_b\text{ for any }j_i<a\leq b\leq j_{i+1}.
\end{equation} We claim that the set of indices $\mathcal J$ is symmetric:
\begin{equation}
\label{eq:JJJ-symmetric} j\in\mathcal J \Longleftrightarrow k-j\in\mathcal J.
\end{equation}
Checking this is equivalent to prove that $\varphi(W_j)\subset W_j\otimes
K$ implies that $\varphi(W_j^{\perp_{\Omega}})\subset W_j^{\perp_{\Omega}}\otimes
K$. Suppose that this is not true, so that for
some $j$ we have $\varphi(W_j)\subset W_j\otimes K$ and there exists
some $w\in W_j^{\perp_{\Omega}}$ such that $\varphi(w)\notin W_j^{\perp_{\Omega}}\otimes
K$. Then there exists $v\in W_j$ such that
$\Omega(v,\varphi(w))\neq 0$. However, since $\varphi$ is symmetric with respect to $\Omega$, we must have $\Omega(v,\varphi(w))=\Omega(\varphi(v),w)$, and the latter vanishes because  by
assumption $\varphi(v)$ belongs to $W_j$. So we have reached a
contradiction, and (\ref{eq:JJJ-symmetric}) holds.

Let $\mathcal J'=\{j\in\mathcal J\mid 2j\leq k\}$ and, for each
$j\in\mathcal J'$, define the vector
$$L_j=-\sum_{c\leq j}e_c+\sum_{d\geq k-j+1}e_d$$
where $e_1,\dots,e_k$ is the canonical basis of $\R^k$. It
follows from (\ref{eq:Lambda-JJJ}) and (\ref{eq:JJJ-symmetric})
that $\Lambda(\mathcal W,\varphi)$ is the positive span of the vectors $\{
L_j\mid j\in\mathcal J'\}$. Hence,
\begin{equation}\label{eq:equivalent dgeq 0}
d(\mathcal W,\lambda)\geq 0\text{ for any }\lambda\in\Lambda(\mathcal W,\varphi)
\Longleftrightarrow d(\mathcal W,L_j)\geq 0\text{ for any }j\in\mathcal{J}'.
\end{equation}
Now, we compute $d(\mathcal W,L_j)=-\deg W_{k-j}-\deg W_j$. On the other
hand, since $W_{k-j}=W_j^{\perp_\Omega}$, we have an exact sequence $0\to W_{k-j}\to W\to
W_j^*\to 0$ (the projection is given by $v\mapsto\Omega(v,-)$), so $0=\deg W^*=\deg W_{k-j}+\deg W_j^*$,
hence $\deg W_{k-j}=\deg W_j$. Therefore $d(\mathcal W,L_j)\geq 0$ is
equivalent to $\deg W_j\leq 0$, which holds by assumption, because $W_j$ is $\varphi$-invariant and isotropic for every $j\in\mathcal J'$. Hence, from (\ref{eq:equivalent dgeq 0}) and Proposition \ref{prop:u*2n-alpha-stability}, it follows that $(W,\Omega,\varphi)$ is semistable.

The converse statement, namely, that if $(W,\Omega,\varphi)$ is
semistable then for any isotropic and $\varphi$-invariant subbundle $W'\subset W$ we have $\deg W'\leq 0$, is
immediate by applying the stability condition of Proposition \ref{prop:u*2n-alpha-stability} to the filtration
$0\subset W'\subset W'^{\perp_{\Omega}}\subset W$.

The proof of the second statement on stability is very similar to the  case of semistability, so we omit it.

Let us now consider the statement on polystability. Let $(W,\Omega,\varphi)$ be a semistable $\U^*(2n)$-Higgs bundle such that, for any isotropic and $\varphi$-invariant strict subbundle $ 0\neq W' \subset W$ such that  $\deg W' = 0$, there is another coisotropic and $\varphi$-invariant subbundle $0\neq W''\subset W$ such that $W=W'\oplus W''$. We want to prove that $(W,\Omega,\varphi)$ is polystable. Take any filtration $$\mathcal W=(0\subsetneq W_1\subsetneq W_2\subsetneq\dots\subsetneq W_k=W)$$ satisfying
$W_{k-j}=W_j^{\perp_{\Omega}}$ for any $j$, and the convex set $\Lambda(\mathcal W,\varphi)$ defined in (\ref{eq:convex set}). Let
\begin{equation}\label{eq:lambdainLambda}
\lambda\in\Lambda(\mathcal W,\varphi)
\end{equation}
 satisfying
\begin{equation}\label{eq:lambdaj<lambdaj+1}
\lambda_j<\lambda_{j+1}
\end{equation}
for every $j$, and such that
\begin{equation}\label{eq:d(W,lambda)=0}
d(\mathcal W,\lambda)=0.
\end{equation}
From (\ref{eq:lambdainLambda}), (\ref{eq:lambdaj<lambdaj+1}) and (\ref{eq:Lambda-JJJ}), we conclude that $$\mathcal J=\{1,\ldots,k\}$$ where $\mathcal J$ is given in (\ref{eq:JJJ}). Therefore, every $W_j$ in the filtration $\mathcal W$ is a $\varphi$-invariant subbundle of $W$.
Now, using the same arguments as in the proof of the semistability condition above, we conclude from (\ref{eq:d(W,lambda)=0}),
that $$\deg(W_i)=0,$$ for all $i\in\mathcal J'=\{1,\ldots,[k/2]\}$. Each of these $W_i$ is a strict isotropic and $\varphi$-invariant subbundle of $W$. In particular this holds for $W_1$, so from our assumption, we know that $W/W_1$ is a $\varphi$-invariant coisotropic subbundle of $W$ and
$W\cong W_1\oplus W/W_1$. The same is true for $W_i$ with $i\in\mathcal J'$, hence $$W\cong W_1\oplus W_2/W_1\oplus\dots\oplus W/W_{[k/2]}.$$ For $i\in\mathcal J\setminus\mathcal J'$, $W_i$ is a strict coisotropic and $\varphi$-invariant subbundle of $W$, so $W/W_i$ is a $\varphi$-invariant isotropic subbundle of $W$, and $W\cong W_i\oplus W/W_i$. Thus $$W\cong W_1\oplus W_2/W_1\oplus\dots\oplus W_{k-1}/W_{k-2}\oplus W/W_{k-1}.$$
Since $W_{k-j}=W_j^{\perp_\Omega}$ it follows that $$\Omega(W_i/W_{i-1},W_j/W_{j-1})=0,\ \text{ unless }\ i+j=k+1.$$
 Moreover, since every $W_j$ is $\varphi$-invariant and $\varphi$ is symmetric with respect to $\Omega$, it follows that, with respect to the above decomposition of $W$,
 $$\varphi\in H^0\bigg(\bigoplus_i\End(W_i/W_{i-1})\otimes K\bigg).$$ So, from Proposition \ref{prop:u*2n-alpha-stability}, $(W,\Omega,\varphi)$ is polystable.

The converse statement is
immediate by applying the stability condition of Proposition \ref{prop:u*2n-alpha-stability} the filtration
$0\subset W'\subset W'^{\perp_{\Omega}}\subset W$ if the $\varphi$-invariant subbundle $W'\subset W$ with $\deg(W')=0$ is isotropic, or to the filtration $0\subset W'^{\perp_{\Omega}}\subset W'\subset W$ if it is coisotropic.
\end{proof}

In order to construct moduli spaces, we need to consider $S$-equivalence classes of semistable $G$-Higgs bundles (cf. \cite{schmitt:2008}). For a stable $G$-Higgs bundle, its $S$-equivalence class coincides with its isomorphism class and for a strictly semistable $G$-Higgs bundle, its $S$-equivalence contains precisely one (up to isomorphism) representative which is polystable so this class can be thought as the isomorphism class of the unique polystable $G$-Higgs bundle which is $S$-equivalent to the given strictly semistable one.

These moduli spaces have been constructed by Schmitt in \cite{schmitt:2008}, using methods of Geometric Invariant Theory, showing that they carry a natural structure of complex algebraic variety.

\begin{definition}\label{moduli of Higgs bundles}
Let $X$ be a compact Riemann of genus $g\geq 2$. For a reductive Lie group $G$, the \emph{moduli space of $G$-Higgs bundles over $X$} is the complex analytic variety of isomorphism classes of polystable $G$-Higgs bundles. We denote it by $\M_G$:
$$\M_G=\{\text{Polystable }G\text{-Higgs bundles on }X\}/\sim.$$ For a fixed topological class $c$ of $G$-Higgs bundles, denote by $\M_G(c)$ the moduli space of $G$-Higgs bundles which belong to the class $c$.
\end{definition}

\begin{remark}
 If $G$ is an algebraic group then $\M_G$ has the structure of complex algebraic variety.
\end{remark}

\subsection{Deformation theory of $\U^*(2n)$-Higgs bundles}

In this section, we briefly recall the deformation theory of $G$-Higgs bundles and, in particular, the identification of the tangent space of $\M_G$ at the smooth points. All these basic notions can be found in detail in \cite{garcia-gothen-mundet:2008}.

The spaces $\liehc$ and $\liemc$ in the Cartan decomposition of $\liegc$ verify the relation $$[\liehc,\liemc]\subset\liemc$$ hence, given $v\in\liemc$, there is
an induced map $\ad(v)|_{\liehc}:\liehc\to\liemc$.
Applying this to a $G$-Higgs bundle $(E_{H^\CC},\varphi)$, we obtain the
following complex of sheaves on the curve $X$:
$$C_G^\bullet(E_{H^\CC},\varphi):\mathcal{O}(E_{H^\CC}(\liehc))\xrightarrow{\ad(\varphi)}\mathcal{O}(E_{H^\CC}(\liemc)\otimes K).$$

\begin{proposition}\label{deformation for Higgs}
Let $(E_{H^\CC},\varphi)$ be a $G$-Higgs bundle over $X$.
\begin{enumerate}
    \item The
infinitesimal deformation space of $(E_{H^\CC},\varphi)$ is isomorphic to the
first hypercohomology group $\HH^1(C_G^\bullet(E_{H^\CC},\varphi))$ of the
complex of sheaves $C_G^\bullet(E_{H^\CC},\varphi)$;
    \item There is a long exact sequence
\begin{equation*}
\begin{split}
0&\longrightarrow\HH^0(C_G^\bullet(E_{H^\CC},\varphi))\longrightarrow
H^0(E_{H^\CC}(\liehc))\longrightarrow
H^0(E_{H^\CC}(\liemc)\otimes
K)\longrightarrow\\
&\longrightarrow\HH^1(C_G^\bullet(E_{H^\CC},\varphi))\longrightarrow H^1(E_{H^\CC}(\liehc))\longrightarrow
H^1(E_{H^\CC}(\liemc)\otimes K)\longrightarrow\\
&\longrightarrow\HH^2(C_G^\bullet(E_{H^\CC},\varphi))\longrightarrow 0
\end{split}
\end{equation*}
where the maps $H^i(E_{H^\CC}(\liehc))\to H^i(E_{H^\CC}(\liemc)\otimes K)$ are induced by
$\mathrm{ad}(\varphi)$.
\end{enumerate}
\end{proposition}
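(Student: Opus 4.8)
The plan is to establish part (1) by the standard \v{C}ech description of first-order deformations, and then to deduce part (2) as a purely homological consequence. For part (1), I would fix a good open cover $\{U_\alpha\}$ of $X$ trivializing $E_{H^\CC}$, with holomorphic transition functions $g_{\alpha\beta}\colon U_{\alpha\beta}\to H^\CC$. A first-order deformation over $\CC[\epsilon]/(\epsilon^2)$ deforms the gluing to $g_{\alpha\beta}\exp(\epsilon a_{\alpha\beta})$, where $a_{\alpha\beta}$ is a section of $E_{H^\CC}(\liehc)$ over $U_{\alpha\beta}$; associativity of the deformed transition functions forces $\{a_{\alpha\beta}\}$ to be a \v{C}ech $1$-cocycle. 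Simultaneously, writing the Higgs field locally as $\varphi_\alpha$ and deforming it to $\varphi_\alpha+\epsilon\psi_\alpha$ with $\psi_\alpha$ a section of $E_{H^\CC}(\liemc)\otimes K$, the requirement that the $\varphi_\alpha+\epsilon\psi_\alpha$ glue into a global section of the deformed associated bundle---whose transition functions are twisted by the isotropy representation $\iota$, hence deformed to first order by $1+\epsilon\,\ad(a_{\alpha\beta})$---produces a compatibility relation of the form $\psi_\beta-\psi_\alpha=\ad(\varphi)(a_{\alpha\beta})$ on overlaps.

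The key observation is then that a pair $(\{a_{\alpha\beta}\},\{\psi_\alpha\})$ satisfying these two conditions is \emph{exactly} a $1$-hypercocycle of the two-term complex $C_G^\bullet(E_{H^\CC},\varphi)$: the cocycle condition on $\{a_{\alpha\beta}\}$ and the relation $\delta\{\psi_\alpha\}=\ad(\varphi)\{a_{\alpha\beta}\}$ are precisely the two defining equations of the total \v{C}ech differential. Moreover, two deformations are equivalent exactly when they differ by an infinitesimal gauge transformation $\{\exp(\epsilon h_\alpha)\}$ with $h_\alpha$ a section of $E_{H^\CC}(\liehc)$, and a direct computation shows that this changes $(\{a_{\alpha\beta}\},\{\psi_\alpha\})$ by $(\delta\{h_\alpha\},\ad(\varphi)\{h_\alpha\})$, i.e.\ by a hypercoboundary. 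Passing to the limit over refinements of the cover identifies the space of equivalence classes of first-order deformations with $\HH^1(C_G^\bullet(E_{H^\CC},\varphi))$, which proves (1).

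For part (2), I would invoke the long exact sequence of hypercohomology attached to a two-term complex. Concretely, the stupid truncation gives a short exact sequence of complexes $0\to (E_{H^\CC}(\liemc)\otimes K)[-1]\to C_G^\bullet(E_{H^\CC},\varphi)\to E_{H^\CC}(\liehc)[0]\to 0$, whose associated long exact sequence in hypercohomology yields, using that the hypercohomology of a sheaf placed in degree $i$ is its ordinary cohomology shifted by $i$, exactly the displayed sequence; the connecting homomorphisms are induced by the differential $\ad(\varphi)$, which identifies the maps $H^i(E_{H^\CC}(\liehc))\to H^i(E_{H^\CC}(\liemc)\otimes K)$ as claimed. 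The sequence terminates with $\HH^2\to 0$ because $X$ is a curve, so $H^i$ vanishes for $i\geq 2$.

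The main obstacle is the careful bookkeeping in part (1): one must track how deforming the principal-bundle cocycle $g_{\alpha\beta}$ acts, through the derivative of the isotropy representation, on the locally defined Higgs fields, and verify that the resulting compatibility relation matches the sign and index conventions of the hypercocycle differential. Once this gluing computation is set up correctly---and once one checks that the identification is independent of the chosen cover and of the local representatives---both the cocycle/coboundary dictionary of part (1) and the formal homological algebra of part (2) follow routinely.
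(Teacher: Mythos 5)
Your argument is correct and is the standard one: the paper itself states this proposition without proof, deferring to \cite{garcia-gothen-mundet:2008}, where the identification of first-order deformations with hypercocycles of the two-term complex $C_G^\bullet(E_{H^\CC},\varphi)$ and the long exact sequence coming from the stupid truncation $0\to (E_{H^\CC}(\liemc)\otimes K)[-1]\to C_G^\bullet\to E_{H^\CC}(\liehc)[0]\to 0$ are carried out exactly as you describe. Your \v{C}ech bookkeeping for part (1) and the vanishing of $H^2$ on a curve for the termination of the sequence in part (2) are both as expected, so nothing is missing.
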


Now, given a $\U^*(2n)$-Higgs bundle $(W,\Omega,\varphi)$, the complex $C_G^\bullet(E_{H^\CC},\varphi)$ defined above, becomes the complex of sheaves
$$C^\bullet(W,\Omega,\varphi):\Lambda_\Omega^2W\xrightarrow{\ad(\varphi)}S_\Omega^2W\otimes K$$
where $\Lambda_\Omega^2W$ denotes the bundle of endomorphisms of $W$ which are skew-symmetric with respect to $\Omega$, and where $\ad(\varphi)=[\varphi,-]$ is given by the Lie bracket.

Proposition \ref{deformation for Higgs} applied to the case of $\U^*(2n)$-Higgs bundles, yields the following.
\begin{proposition}\label{deformation complex for quadruples}
Let $(W,\Omega,\varphi)$ be a $\U^*(2n)$-Higgs bundle over $X$.
\begin{enumerate}
    \item The
infinitesimal deformation space of $(W,\Omega,\varphi)$ is isomorphic to
the first hypercohomology group
$\HH^1(C^\bullet(W,\Omega,\varphi))$ of
$C^\bullet(W,\Omega,\varphi)$. In particular, if $(W,\Omega,\varphi)$
represents a smooth point of $\M_d$, then
    $$T_{(W,\Omega,\varphi)}\M\simeq\HH^1(C^\bullet(W,\Omega,\varphi)).$$
    \item There is an exact sequence
\begin{equation*}
\begin{split}
0&\longrightarrow\HH^0(C^\bullet(W,\Omega,\varphi))\longrightarrow
H^0(\Lambda_\Omega^2W)\longrightarrow H^0(S_\Omega^2W\otimes
K)\longrightarrow\\
&\longrightarrow\HH^1(C^\bullet(W,\Omega,\varphi))\longrightarrow H^1(\Lambda_\Omega^2W)\longrightarrow
H^1(S_\Omega^2W\otimes K)\longrightarrow\\
&\longrightarrow\HH^2(C^\bullet(W,\Omega,\varphi))\longrightarrow 0
\end{split}
\end{equation*}
where
the maps $H^i(\Lambda_\Omega^2W)\to
H^i(S_\Omega^2W\otimes K)$ are induced by $\ad(\varphi)=[\varphi,-]$.
\end{enumerate}
\end{proposition}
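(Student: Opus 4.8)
The plan is to obtain both statements as a direct specialization of Proposition \ref{deformation for Higgs}, once the abstract data of a $G$-Higgs bundle is rewritten in the vector-bundle language of $\U^*(2n)$-Higgs bundles. For $G=\U^*(2n)$ the maximal compact has complexification $H^\CC=\Sp(2n,\CC)$, so a principal holomorphic $H^\CC$-bundle $E$ is equivalent, via the standard representation $\mathbb{W}$, to a rank $2n$ holomorphic vector bundle $W=E\times_{\Sp(2n,\CC)}\mathbb{W}$ together with the symplectic form $\Omega$ induced from the $\Sp(2n,\CC)$-invariant form on $\mathbb{W}$. Under this correspondence I would transport the general deformation complex $C_G^\bullet(E_{H^\CC},\varphi)$ to the curve in the concrete terms of $(W,\Omega,\varphi)$.

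First I would identify the two associated bundles occurring in the general complex. The adjoint representation of $\Sp(2n,\CC)$ on $\liehc=\mathfrak{sp}(2n,\CC)$ realizes the Lie algebra as the space of endomorphisms of $\mathbb{W}$ that are skew-symmetric with respect to $\Omega$, so $E_{H^\CC}(\liehc)=\Lambda_\Omega^2W$; likewise the isotropy representation on $\liemc=S_\Omega^2\mathbb{W}$ gives $E_{H^\CC}(\liemc)=S_\Omega^2W$, which is exactly the bundle of which $\varphi$ is a $K$-twisted section. These identifications are functorial because $\Sp(2n,\CC)$ acts on $\End(\mathbb{W})$ by conjugation, preserving the splitting $\End(\mathbb{W})=\mathfrak{sp}(2n,\CC)\oplus S_\Omega^2\mathbb{W}$ recorded in the Cartan decomposition. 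Next I would check that the differential specializes correctly: the abstract map is induced by $\ad(v)|_{\liehc}\colon\liehc\to\liemc$, $\eta\mapsto[v,\eta]$, for $v\in\liemc$, and after twisting by $\varphi$ this becomes $\ad(\varphi)=[\varphi,-]\colon\Lambda_\Omega^2W\to S_\Omega^2W\otimes K$. That the image is indeed $\Omega$-symmetric is precisely the inclusion $[\liemc,\liehc]\subset\liemc$ from the Cartan decomposition, which here says that the commutator of an $\Omega$-symmetric endomorphism with an $\Omega$-skew one is again $\Omega$-symmetric. This shows that $C^\bullet(W,\Omega,\varphi)$ is the complex $C_G^\bullet(E_{H^\CC},\varphi)$ written out explicitly.

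With the dictionary in place, both assertions follow immediately from Proposition \ref{deformation for Higgs}. Part (i), including the identification of the tangent space $T_{(W,\Omega,\varphi)}\M$ at a smooth point with $\HH^1(C^\bullet(W,\Omega,\varphi))$, is the transported statement of part (1) there, and the long exact sequence of part (ii) is the hypercohomology sequence of part (2), with the connecting maps $H^i(\Lambda_\Omega^2W)\to H^i(S_\Omega^2W\otimes K)$ induced by $\ad(\varphi)$ as stated. I do not expect a serious obstacle; the only point demanding care is the verification that the associated-bundle construction respects the $\Omega$-skew / $\Omega$-symmetric splitting of $\End(\mathbb{W})$ and the bracket, which is a routine consequence of the $\Sp(2n,\CC)$-equivariance of these operations.
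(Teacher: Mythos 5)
Your proposal is correct and matches the paper's treatment: the paper gives no separate proof, simply stating that Proposition \ref{deformation for Higgs} applied to $G=\U^*(2n)$ yields the result once one identifies $E_{H^\CC}(\liehc)=\Lambda_\Omega^2W$, $E_{H^\CC}(\liemc)=S_\Omega^2W$ and the differential with $\ad(\varphi)=[\varphi,-]$, exactly as you do. Your added verification that the associated-bundle construction respects the Cartan splitting and the bracket is the right (routine) point to check and is consistent with the paper's setup.
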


The definition of simple $G$-Higgs bundle is given in \cite{garcia-gothen-mundet:2008} 
as follows.
\begin{definition}\label{general definition of simple}
A $G$-Higgs bundle $(E_{H^\CC},\varphi)$ is \emph{simple} if $\Aut(E_{H^\CC},\varphi)=\ker(\iota)\cap Z(H^\CC)$, where $Z(H^\CC)$ is the center of $H^\CC$ and $\iota$ is the isotropy representation (\ref{complex isotropy representation}).
\end{definition}

Contrary to the case of vector bundles, stability of a $G$-Higgs bundle does not imply that it is simple.

From Proposition \ref{deformation for Higgs}, one has that 
\begin{equation}\label{eq:dimH1}
\begin{split}
\dim\HH^1(C_G^\bullet(E_{H^\CC},\varphi))&=\chi(E_{H^\CC}(\liemc)\otimes K)-\chi(E_{H^\CC}(\liehc))+\\
&+\dim\HH^0(C_G^\bullet(E_{H^\CC},\varphi))+\dim\HH^2(C_G^\bullet(E_{H^\CC},\varphi))
\end{split}
\end{equation}
where $\chi=\dim H^0-\dim H^1$ denotes the Euler characteristic. The summand $$\chi(E_{H^\CC}(\liemc)\otimes K)+\chi(E_{H^\CC}(\liehc))$$ only depends on the topological class $c$ of $E_{H^\CC}$, which is fixed 
when we consider  $\M_G(c)$.
In order for a polystable $G$-Higgs bundle $(E_{H^\CC},\varphi)$ represent a smooth point of the moduli space $\M_G$, $\dim\HH^0(C_G^\bullet(E_{H^\CC},\varphi))$ and $\dim\HH^2(C_G^\bullet(E_{H^\CC},\varphi))$ must have the minimum possible value. Indeed, we have the following Proposition \ref{prop:stable, simple and H2=0 imply smooth} (cf. \cite{garcia-gothen-mundet:2008}), which gives sufficient conditions for a $G$-Higgs bundle $(E_{H^\CC},\varphi)$ represent a smooth point of $\M_G$. It uses the construction of a $G^\CC$-Higgs bundle from a $G$-Higgs bundle, which we now briefly explain.

Suppose that $G$ is a real form of $G^\CC$. The adjoint representation $$\Ad_{G^\CC}:G^\CC\to\Aut(\liegc)$$ of $G^\CC$ on its Lie algebra restricts to $H^\CC\subset G^\CC$ and the restriction splits as sum 
\begin{equation}\label{eq:split of adjoint rep of GC}
\Ad_{G^\CC}|_{H^\CC}=\Ad_{H^\CC}\oplus\iota
\end{equation}
where $\Ad_{H^\CC}:H^\CC\to\Aut(\liehc)$ is the adjoint representation of $H^\CC$ on $\liehc$ and $\iota:H^\CC\to\Aut(\liemc)$ is the isotropy representation (\ref{complex isotropy representation}). From a $G$-Higgs bundle $(E_{H^\CC},\varphi)$, we obtain a $G^\CC$-Higgs bundle as follows. Take $E_{G^\CC}$ to be the holomorphic $G^\CC$-principal bundle obtained from $E_{H^\CC}$ by extending the structure group through the inclusion $H^\CC\hookrightarrow G^\CC$. From this construction of $E_{G^\CC}$ and from (\ref{eq:split of adjoint rep of GC}), we have the spliting $$E_{G^\CC}\times_{G^\CC}\liegc=E_{H^\CC}\times_{H^\CC}\liegc=E_{H^\CC}\times_{H^\CC}\liehc\oplus E_{H^\CC}\times_{H^\CC}\liemc.$$
So, define $\varphi'\in H^0(E_{G^\CC}\times_{G^\CC}\liegc\otimes K)$ by considering the above spliting, taking $\varphi\in H^0(E_{H^\CC}\times_{H^\CC}\liemc\otimes K)$ and taking the zero section of $E_{H^\CC}\times_{H^\CC}\liehc$.
We say that \emph{$(E_{G^\CC},\varphi')$ is the $G^\CC$-Higgs bundle associated to the $G$-Higgs bundle $(E_{H^\CC},\varphi)$}.
Also, when we say that we view the $G$-Higgs bundle $(E_{H^\CC},\varphi)$ \emph{as a $G^\CC$-Higgs bundle}, it is this construction that we are referring to (see also \cite{bradlow-garcia-prada-gothen:2009}).

Now we can state the result.
\begin{proposition}\label{prop:stable, simple and H2=0 imply smooth}
Let $(E_{H^\CC},\varphi)$ be a polystable $G$-Higgs bundle which is stable, simple and such that $\HH^2(C_G^\bullet(E_{H^\CC},\varphi))=0$. Then it corresponds to a smooth point of the moduli space $\M_G(c)$. In particular, if $(E_{H^\CC},\varphi)$ is a simple $G$-Higgs bundle which is stable as a $G^{\CC}$-Higgs bundle, then it is a smooth point in the moduli space.
\end{proposition}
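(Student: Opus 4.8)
The plan is to read off smoothness from the deformation theory recorded in Proposition \ref{deformation for Higgs}: the Zariski tangent space to $\M_G(c)$ at $(E_{H^\CC},\varphi)$ is $\HH^1(C_G^\bullet(E_{H^\CC},\varphi))$, the group $\HH^0(C_G^\bullet(E_{H^\CC},\varphi))$ is the space of infinitesimal automorphisms, and $\HH^2(C_G^\bullet(E_{H^\CC},\varphi))$ is the obstruction space. I would then invoke the Kuranishi-type local model valid at a polystable point: a neighbourhood of $(E_{H^\CC},\varphi)$ in $\M_G(c)$ is analytically isomorphic to a neighbourhood of $0$ in $\Psi^{-1}(0)/\Aut(E_{H^\CC},\varphi)$, where $\Psi\colon\HH^1\to\HH^2$ is an $\Aut(E_{H^\CC},\varphi)$-equivariant holomorphic map vanishing to second order at the origin. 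Smoothness then amounts to showing that the obstruction map is trivial and that the automorphism group acts trivially on $\HH^1$.

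For the first statement I would exploit simplicity and the vanishing of $\HH^2$ separately. By Definition \ref{general definition of simple}, simplicity gives $\Aut(E_{H^\CC},\varphi)=\ker(\iota)\cap Z(H^\CC)$; every such element is central in $H^\CC$, hence acts trivially on $E_{H^\CC}(\liehc)$ through $\Ad_{H^\CC}$, and lies in $\ker(\iota)$, hence acts trivially on $E_{H^\CC}(\liemc)$. By the splitting (\ref{eq:split of adjoint rep of GC}) it therefore acts trivially on the entire complex $C_G^\bullet$, and in particular on $\HH^1$, so the quotient by $\Aut(E_{H^\CC},\varphi)$ does not affect the local model. Since $\HH^2=0$, the map $\Psi$ takes values in the zero space, so $\Psi\equiv 0$ and the local model is the smooth vector space $\HH^1$. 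Hence $(E_{H^\CC},\varphi)$ is a smooth point of $\M_G(c)$, of dimension given by (\ref{eq:dimH1}).

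For the ``in particular'' statement the task is to verify the hypotheses of the first statement starting from $G^\CC$-stability. I would first check that stability as a $G^\CC$-Higgs bundle entails polystability and stability as a $G$-Higgs bundle, and that the associated $G^\CC$-Higgs bundle is simple, so that $\HH^0(C^\bullet_{G^\CC})$ consists only of the central directions of $\liegc$. The crucial step is the vanishing of $\HH^2(C_G^\bullet)$. For this I would combine Serre duality for the two-term complex with the invariant pairing identifying $\liehc\cong\liehc^*$ and $\liemc\cong\liemc^*$, which identifies $\HH^2(C_G^\bullet)^*$ with the $\HH^0$ of the complementary complex $E_{H^\CC}(\liemc)\xrightarrow{\ad(\varphi)}E_{H^\CC}(\liehc)\otimes K$. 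Using the splitting (\ref{eq:split of adjoint rep of GC}), this complementary complex is a direct summand of the $G^\CC$-deformation complex, so its $\HH^0$ is a summand of $\HH^0(C^\bullet_{G^\CC})$, namely exactly the part of the center of $\liegc$ lying in $\liemc$. When that part vanishes, $\HH^2(C_G^\bullet)=0$ and the first statement applies verbatim.

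The hard part will be precisely controlling this central summand of $\HH^2(C_G^\bullet)$, because its vanishing is genuinely not automatic: it fails whenever the center of $\liegc$ meets $\liemc$ nontrivially. This is where $\U^*(2n)$ demands extra care, since the identity endomorphism spanning the center of $\gl(2n,\CC)$ is $\Omega$-symmetric and hence lies in $\liemc=S^2_\Omega\mathbb{W}$ rather than in $\liehc=\mathfrak{sp}(2n,\CC)$; the complementary complex then carries the nonzero constant section $\Id$, so $\HH^2(C_G^\bullet)$ does not vanish on the nose. I would resolve this by showing that the residual obstruction, being purely central, does not in fact obstruct --- either because the corresponding component of $\Psi$ is forced to vanish by equivariance, or by isolating the determinant direction using that $\deg W=0$ makes it trivial and passing to the trace-free part --- so that the deformation argument of the first statement still delivers smoothness with tangent space $\HH^1$.
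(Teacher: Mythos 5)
The paper offers no proof of this proposition --- it is imported verbatim from \cite{garcia-gothen-mundet:2008} --- so your argument has to be judged on its own merits rather than against an in-text proof. Your treatment of the first assertion is correct and is the standard argument: at a polystable point the moduli space is locally modelled on $\Psi^{-1}(0)/\Aut(E_{H^\CC},\varphi)$ with $\Psi\colon\HH^1\to\HH^2$ vanishing to second order; simplicity gives $\Aut(E_{H^\CC},\varphi)=\ker(\iota)\cap Z(H^\CC)$, which acts trivially on both terms of $C_G^\bullet$ (centrality kills the $\Ad$-action on $E_{H^\CC}(\liehc)$, membership in $\ker\iota$ kills the action on $E_{H^\CC}(\liemc)$), and $\HH^2=0$ forces $\Psi\equiv 0$, so the local model is the vector space $\HH^1$.

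The genuine gap is in the ``in particular'' clause, and you flag it yourself. Your Serre-duality identification of $\HH^2(C_G^\bullet)^*$ with the $\varphi$-commuting sections of $E_{H^\CC}(\liemc)$, hence with $\liez(\liegc)\cap\liemc$ when the associated $G^\CC$-Higgs bundle is stable, is correct; and so is your observation that this space is nonzero precisely for the group this paper needs: $\Id$ is $\Omega$-symmetric, so $\liez(\gl(2n,\CC))=\CC\cdot\Id$ lies in $\liemc=S^2_\Omega W$ and $\HH^2(C^\bullet(W,\Omega,\varphi))\cong\CC\neq 0$ for \emph{every} stable and simple $\U^*(2n)$-Higgs bundle. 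Thus the second assertion does not reduce to the first for $G=\U^*(2n)$, and your concern is corroborated by the surrounding text: by (\ref{eq:dimH1}) one gets $\dim\HH^1=4n^2(g-1)+1$ (not the value $4n^2(g-1)$ quoted after Proposition \ref{prop:stable and simple imply smooth}), which is the correct dimension of the $\U^*(2n)$-character variety since $\U^*(2n)$ has one-dimensional centre; likewise the appeal to semisimplicity of $\U^*(2n)$ in the proof of Theorem \ref{ad} to conclude $\HH^2=0$ is not literally correct (harmless there, as only the weight-$k$ pieces with $k\geq 1$ are used and $\Id$ has weight $0$). Your proposed repair --- that the residual obstruction is purely central and does not obstruct, e.g.\ because $\tr[a,b]=0$, or by splitting off the trace/determinant direction --- is the right idea and does succeed, but as written it is an announced strategy rather than an argument: you must either show that every homogeneous term of the Kuranishi map (all built from iterated brackets, hence trace-free) pairs to zero against $\Id$, or honestly exhibit the splitting of $\M_{\U^*(2n)}$ into an unobstructed trace factor and a centre-free part to which the first assertion applies. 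Until one of these is carried out, the ``in particular'' statement is not proved in your write-up.
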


Let $(E_{H^\CC},\varphi)$ represent a smooth point of $\M_G(c)$. The \emph{expected dimension} of $\M_G(c)$ is given by  \begin{equation}\label{eq:expected dimension}
\chi(E_{H^\CC}(\liemc)\otimes K)-\chi(E_{H^\CC}(\liehc))+\dim\Aut(E_{H^\CC},\varphi).
\end{equation}
The actual dimension of the moduli space (if non-empty) can be strictly smaller than the expected dimension. This phenomenon occurs for example in $\M_{\U(p,q)}$, as explained in \cite{bradlow-garcia-prada-gothen:2003}, where there is a component of dimension strictly smaller than the expected one. In fact, in that component there are no stable objects.

\subsection{Stable and non-simple $\U^*(2n)$-Higgs bundles}

Our goal in this section is to give an explicit description of $\U^*(2n)$-Higgs bundles which are stable but not simple.

As an example of the above construction of a $G^\CC$-Higgs bundle associated to a $G$-Higgs bundle, and which will be important below, consider a $\U^*(2n)$-Higgs bundle $(W,\Omega,\varphi)$. Then, the corresponding $\GL(2n,\CC)$-Higgs bundle is simply $(W,\varphi)$. So we forget the symplectic form on the vector bundle $W$.

\begin{proposition}\label{prop:equivalence of semistability}
 Let $(W,\Omega,\varphi)$ be a $\U^*(2n)$-Higgs bundle and $(W,\varphi)$ be the corresponding $\GL(2n,\CC)$-Higgs bundle. Then $(W,\Omega,\varphi)$ is semistable if and only if $(W,\varphi)$ is semistable.
\end{proposition}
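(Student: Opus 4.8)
The plan is to rewrite each semistability condition purely in terms of degrees of subbundles and then compare the two classes of test subbundles. By Proposition \ref{(poly)stability for usual Higgs bundles}, and since $\deg W=0$ forces $\mu(W)=0$, semistability of the $\GL(2n,\CC)$-Higgs bundle $(W,\varphi)$ is equivalent to $\deg W'\leq 0$ for \emph{every} $\varphi$-invariant subbundle $W'\subsetneq W$. By contrast, Proposition \ref{prop:orthogonal-stability} says that $(W,\Omega,\varphi)$ is semistable precisely when $\deg W'\leq 0$ for every \emph{isotropic} $\varphi$-invariant subbundle $W'\subset W$. One implication is then immediate: if $(W,\varphi)$ is semistable, then a fortiori every isotropic $\varphi$-invariant subbundle has non-positive degree, so $(W,\Omega,\varphi)$ is semistable.

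For the converse I would take an arbitrary $\varphi$-invariant subbundle $V\subset W$ and bound $\deg V$ using only isotropic subbundles. The first key point is that $V^{\perp_\Omega}$ is again $\varphi$-invariant: for $w\in V^{\perp_\Omega}$ and $v\in V$, the symmetry of $\varphi$ with respect to $\Omega$ gives $\Omega(v,\varphi(w))=\Omega(\varphi(v),w)$, which vanishes because $\varphi(v)\in V\otimes K$; this is exactly the computation already used to establish (\ref{eq:JJJ-symmetric}). Hence, setting $I=V\cap V^{\perp_\Omega}$, the subbundle $I$ is isotropic and $\varphi$-invariant (an intersection of subbundles is saturated, and an intersection of $\varphi$-invariant subbundles is $\varphi$-invariant), so the semistability hypothesis yields $\deg I\leq 0$.

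The second ingredient is the identity $\deg U^{\perp_\Omega}=\deg U$ valid for \emph{any} subbundle $U\subset W$, coming from the exact sequence $0\to U^{\perp_\Omega}\to W\to U^*\to 0$ (with surjection $w\mapsto\Omega(-,w)|_U$) together with $\deg W=0$. Combining this with the exact sequence of subsheaves $0\to V\cap V^{\perp_\Omega}\to V\oplus V^{\perp_\Omega}\to V+V^{\perp_\Omega}\to 0$, I would compute
\begin{equation*}
\deg(V+V^{\perp_\Omega})=\deg V+\deg V^{\perp_\Omega}-\deg I=2\deg V-\deg I.
\end{equation*}
By nondegeneracy of $\Omega$ the saturation of the subsheaf $V+V^{\perp_\Omega}$ is $\overline{V+V^{\perp_\Omega}}=I^{\perp_\Omega}$, of the same rank, so $\deg(V+V^{\perp_\Omega})\leq\deg I^{\perp_\Omega}=\deg I$. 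Therefore $2\deg V-\deg I\leq\deg I$, i.e. $\deg V\leq\deg I\leq 0$, which is exactly $\GL(2n,\CC)$-semistability.

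I expect the main obstacle to be the degree bookkeeping around saturations rather than any conceptual difficulty: one must carefully distinguish the subsheaf sum $V+V^{\perp_\Omega}$ from its saturation $I^{\perp_\Omega}$, confirm that $V\cap V^{\perp_\Omega}$ is genuinely a subbundle (so that the hypothesis applies to it), and use that passing to a saturation can only increase the degree. The essential idea — that testing degrees on isotropic $\varphi$-invariant subbundles already controls all $\varphi$-invariant subbundles — is entirely carried by pairing an arbitrary $V$ against its $\varphi$-invariant orthogonal complement $V^{\perp_\Omega}$, after which everything reduces to the linear bookkeeping above.
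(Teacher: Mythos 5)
Your proof is correct and follows essentially the same route as the paper: the nontrivial direction is handled by testing the isotropic $\varphi$-invariant subbundle $V\cap V^{\perp_\Omega}$, using $\deg U^{\perp_\Omega}=\deg U$ and the intersection--sum exact sequence to bound $\deg V$. The only difference is that you treat the saturation of $V+V^{\perp_\Omega}$ more carefully, obtaining the inequality $\deg V\leq\deg(V\cap V^{\perp_\Omega})$ where the paper asserts an equality via a sequence written directly with saturations; your version is the more scrupulous bookkeeping and suffices just as well.
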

\proof
If $(W,\varphi)$ is semistable, then it is obvious, taking into account Propositions \ref{(poly)stability for usual Higgs bundles} and \ref{prop:orthogonal-stability}, that $(W,\Omega,\varphi)$ is semistable.

Suppose then that $(W,\Omega,\varphi)$ is semistable. Let $W'\subset W$ be a $\varphi$-invariant subbundle of $W$. Since $W'^{\perp_\Omega}$ is the subbundle of $W$ defined as the kernel of the projection $W\to W'^*$ given by $v\mapsto\Omega(v,-)$, and since $\deg(W)=0$, we have 
\begin{equation}\label{degperp=deg}
\deg(W'^{\perp_\Omega})=\deg(W').
\end{equation} The fact that $\varphi$ is symmetric with respect to $\Omega$, i.e. (\ref{symmetric higgs field}) holds, implies that $W'^{\perp_\Omega}$ is also $\varphi$-invariant.

Consider the exact sequence
\begin{equation}\label{direct sum sequence 0}
0\longrightarrow N\longrightarrow W'\oplus W'^{\perp_\Omega}\longrightarrow M\longrightarrow0, 
\end{equation}
where $N$ and $M$ are the saturations of the sheaves $W'\cap W'^{\perp_\Omega}$ and $W'+W'^{\perp_\Omega}$ respectively. We have that $M=N^{\perp_\Omega}$, so $$0\longrightarrow M\longrightarrow W\longrightarrow N^*\longrightarrow0$$
and thus $\deg(M)=\deg(N)$. It follows from (\ref{degperp=deg}) and (\ref{direct sum sequence 0}) that $\deg(W')=\deg(N)$. But, $N$ is clearly $\varphi$-invariant and also isotropic, so from the semistability of $(W,\Omega,\varphi)$, $\deg(N)\leq 0$ i.e. $\deg(W')\leq 0$. Hence, from Proposition \ref{(poly)stability for usual Higgs bundles}, $(W,\varphi)$ is semistable.
\endproof

\begin{proposition}\label{stable U*(2n)-Higgs bundles}
 Let $(W,\Omega,\varphi)$ be a $\U^*(2n)$-Higgs bundle. Then $(W,\Omega,\varphi)$ is stable if and only if $$(W,\Omega,\varphi)=\bigoplus(W_i,\Omega_i,\varphi_i)$$ where $(W_i,\Omega_i,\varphi_i)$ are $\U^*(\rk(W_i))$-Higgs bundles such that the $\GL(\rk(W_i),\CC)$-Higgs bundles $(W_i,\varphi_i)$ are stable and nonisomorphic.
\end{proposition}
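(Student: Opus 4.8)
The plan is to prove both implications by passing to the associated $\GL(2n,\CC)$-Higgs bundle $(W,\varphi)$ and exploiting the fact that $\Omega$ makes $(W,\varphi)$ self-dual as a Higgs bundle. First I would record that stability of $(W,\Omega,\varphi)$ implies polystability, and that the associated $\GL(2n,\CC)$-Higgs bundle $(W,\varphi)$ is then polystable of slope $0$ (this is the general principle that a polystable $G$-Higgs bundle yields a polystable $G^\CC$-Higgs bundle, here realized through the forgetful construction described before the proposition; alternatively it follows from the Hitchin--Kobayashi correspondence). By the structure theorem recalled after Proposition \ref{(poly)stability for usual Higgs bundles} I may then write the isotypic decomposition $W=\bigoplus_{a}U_a\otimes M_a$, where the $U_a$ are pairwise nonisomorphic stable $\GL$-Higgs bundles of slope $0$ and the $M_a$ are multiplicity vector spaces carrying the zero Higgs field. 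The relation (\ref{symmetric higgs field}) says precisely that $\omega\colon W\to W^*$ intertwines $\varphi$ with $\varphi^t$, so it is an isomorphism of Higgs bundles $(W,\varphi)\cong(W^*,\varphi^t)$; consequently it carries each summand $U_a\otimes M_a$ into the $U_a$-isotypic part of $W^*$, namely the summand indexed by the $b$ with $U_b\cong U_a^*$.

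For the forward implication the key idea is to convert every deviation from the asserted normal form into a nonzero proper isotropic $\varphi$-invariant subbundle of degree $0$, contradicting stability via Proposition \ref{prop:orthogonal-stability}. If some $U_a$ is not self-dual, i.e. $U_a\not\cong U_a^*$, then the restriction of $\Omega$ to $U_a\otimes M_a$, read as a Higgs morphism $U_a\otimes M_a\to U_a^*\otimes M_a^*$, vanishes by Schur's lemma (stable Higgs bundles are simple), so $U_a\otimes M_a$ is such a forbidden subbundle. Hence every $U_a$ is self-dual, $\Omega$ restricts to a nondegenerate antisymmetric form $\Omega_a$ on each $U_a\otimes M_a$, and distinct summands are $\Omega$-orthogonal. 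Fixing $a$, Schur's lemma produces a form $b_a$ on $U_a$ (unique up to scalar) from $U_a\cong U_a^*$, either symmetric or antisymmetric, and $\Omega_a=b_a\otimes h_a$ with $h_a$ a nondegenerate form on $M_a$ of the opposite symmetry type. If $b_a$ is symmetric then $h_a$ is antisymmetric, forcing $\dim M_a\geq 2$; if $b_a$ is antisymmetric and $\dim M_a\geq 2$ then $h_a$ is nondegenerate symmetric in dimension $\geq 2$. In either case $(M_a,h_a)$ has a nonzero isotropic vector, and tensoring $U_a$ with the corresponding line yields the forbidden subbundle. Therefore each $b_a$ is antisymmetric and $\dim M_a=1$, so the summands are pairwise nonisomorphic and each $(U_a,\Omega_a,\varphi|_{U_a})$ is a $\U^*$-Higgs bundle, giving the decomposition.

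For the converse, suppose $(W,\Omega,\varphi)=\bigoplus_i(W_i,\Omega_i,\varphi_i)$ with the $(W_i,\varphi_i)$ stable, pairwise nonisomorphic, and self-dual via $\Omega_i$. Each factor has slope $0$, so $(W,\varphi)$ is $\GL$-polystable and $(W,\Omega,\varphi)$ is semistable by Proposition \ref{prop:equivalence of semistability}. Let $0\neq W'\subsetneq W$ be isotropic and $\varphi$-invariant; semistability gives $\deg W'\leq 0$, and I claim strictness. If $\deg W'=0$, then $W'$ is a $\varphi$-invariant slope-$0$ subbundle of the polystable $(W,\varphi)$, hence a direct summand assembled from the stable factors; since these occur with multiplicity one, Schur's lemma forces $W'=\bigoplus_{i\in S}W_i$ for some $S$. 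But each $W_i$ is self-dual via $\Omega_i$ and the $W_i$ are mutually $\Omega$-orthogonal, so $\Omega$ is nondegenerate on the nonzero $W'$, contradicting isotropy. Thus $\deg W'<0$, and Proposition \ref{prop:orthogonal-stability} yields stability.

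The main obstacle is the bookkeeping in the forward implication: one must show that $\Omega$-symmetry of $\varphi$ together with simplicity of the stable summands pins the pairing of each isotypic component down to the form $b_a\otimes h_a$, and then forces simultaneously multiplicity one and antisymmetry of $b_a$. Once this structural analysis is in place the contradictions with Proposition \ref{prop:orthogonal-stability} are immediate, and the converse is routine.
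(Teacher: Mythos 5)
Your proof is correct, but it takes a genuinely different route from the paper's. The paper works more elementarily: it only invokes the transfer of \emph{semistability} to the associated $\GL(2n,\CC)$-Higgs bundle (Proposition \ref{prop:equivalence of semistability}), then, given a degree-zero $\varphi$-invariant $W'$, it studies the saturations of $W'\cap W'^{\perp_\Omega}$ and $W'+W'^{\perp_\Omega}$, shows by a degree count that the intersection vanishes, so $W=W'\oplus W'^{\perp_\Omega}$, rules out $W'\cong W'^{\perp_\Omega}$ by exhibiting the graph $w\mapsto(w,\sqrt{-1}w)$ as a forbidden isotropic subbundle, and concludes by induction on the rank. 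You instead front-load the structure theory: you pass to the full isotypic decomposition $W=\bigoplus_a U_a\otimes M_a$ of the polystable $\GL$-Higgs bundle and then run the classical Schur-lemma analysis of an invariant symplectic form on an isotypic module, writing $\Omega_a=b_a\otimes h_a$ and using symmetry types to force multiplicity one and antisymmetry of $b_a$. This is clean and delivers the whole decomposition (including pairwise non-isomorphy) in one stroke, but it costs you a heavier input the paper avoids at this stage, namely that stability of $(W,\Omega,\varphi)$ implies polystability of the associated $(W,\varphi)$; you justify this via the general $G\to G^\CC$ principle or the Hitchin--Kobayashi correspondence, which is legitimate in the context of this paper (it is used the same way in the proof of Theorem \ref{thm:description of polystable}), though you could also get polystability of $(W,\Omega,\varphi)$ directly from Proposition \ref{prop:orthogonal-stability}, whose polystability clause is vacuous for a stable object. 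Your converse is essentially the paper's argument, with the extra (correct) observation that a degree-zero invariant subbundle must be a sub-sum of the $W_i$ and hence non-degenerate for $\Omega$.
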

\proof
Suppose that $(W,\Omega,\varphi)$ is stable. From Proposition \ref{prop:equivalence of semistability}, follows that $(W,\varphi)$ is semistable. If it is stable, then there is nothing to prove. So, assume that $(W,\varphi)$ is strictly semistable, and let $W'\subset W$ be a $\varphi$-invariant subbundle of $W$ of degree $0$. The stability of $(W,\Omega,\varphi)$ says that $W'$ is not isotropic.
As in the proof of the previous proposition, consider the exact sequence
\begin{equation}\label{direct sum sequence}
0\longrightarrow N\longrightarrow W'\oplus W'^{\perp_\Omega}\longrightarrow M\longrightarrow0,
\end{equation}
where $N$ and $M$ are the saturations of the sheaves $W'\cap W'^{\perp_\Omega}$ and $W'+W'^{\perp_\Omega}$ respectively. From the sequence
$$0\longrightarrow W'^{\perp_\Omega}\longrightarrow W\longrightarrow W'^*\longrightarrow0$$ we have $\deg(W'^{\perp_\Omega})=\deg(W')=0$ so, from (\ref{direct sum sequence}), $$\deg(N)+\deg(M)=0.$$
Recall again that, since $W'$ is $\varphi$-invariant, then $W'^{\perp_\Omega}$ is also $\varphi$-invariant, so $N\subset W$ is $\varphi$-invariant as well and, since it is isotropic, we must have $\deg(N)<0$, if $N\neq 0$. But, if this occurs, we have $\deg(M)>0$ contradicting (since $M$ is $\varphi$-invariant) the semistability of $(W,\varphi)$. We must therefore have $N=0$, hence $$(W,\varphi)=(W',\varphi|_{W'})\oplus (W'^{\perp_\Omega},\varphi|_{W'^{\perp_\Omega}}).$$
Now, $W'\ncong W'^{\perp_\Omega}$. In fact, if $W'\cong W'^{\perp_\Omega}$ then the inclusion $W'\subset W'\oplus W'=W$ given by $w\mapsto(w,\sqrt{-1}w)$ gives rise to a degree $0$ isotropic, $\varphi$-invariant subbundle of $W$, contradicting the stability of $(W,\Omega,\varphi)$. Finally, notice that we must have
$$\omega=\begin{pmatrix}
     \omega_1 & 0 \\
     0 & \omega_2
\end{pmatrix}$$ with respect to the decomposition $W=W'\oplus W'^{\perp_\Omega}$, where $\omega_1:W'\to W'^*$ and $\omega_2:W'^{\perp_\Omega}\to(W'^{\perp_\Omega})^*$ are skew-symmetric isomorphisms. The symplectic form $\Omega$ therefore splits into a sum of symplectic forms $\Omega_1\oplus\Omega_2$, and we have a splitting 
$$(W,\Omega,\varphi)=(W',\Omega_1,\varphi_1)\oplus(W'^{\perp_\Omega},\Omega_2,\varphi_2).$$

Now, if $(W',\varphi_1)$ is stable as a $\GL(\rk(W'),\CC)$-Higgs bundle and if the same happens to $(W'^{\perp_\Omega},\varphi_2)$, then we are done. If not, then we repeat the argument and, by induction on the rank of $W$, we see that $(W,\Omega,\varphi)$ has the desired form.

To prove the converse, suppose that $$(W,\Omega,\varphi)=\bigoplus(W_i,\Omega_i,\varphi_i)$$ as stated and let $W'\subset W$ be a $\varphi$-invariant subbundle of degree $0$. Since each $(W_i,\varphi_i)$ is a stable $\GL(\rk(W_i),\CC)$-Higgs bundle, then the projection $W'\to W_i$ must be either zero or surjective. Thus, $(W',\varphi|_{W'})$ is a direct sum of some of the $(W_i,\varphi_i)$, so $W'$ is not isotropic and therefore $(W,\Omega,\varphi)$ is stable.
\endproof

Applying Definition \ref{general definition of simple} to the case $G=\U^*(2n)$, we have:
\begin{lemma}
A $\U^*(2n)$-Higgs bundle $(W,\Omega,\varphi)$ is simple if and only if $\Aut(W,\Omega,\varphi)=\Z/2$. 
\end{lemma}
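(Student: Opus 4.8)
The plan is to unwind Definition \ref{general definition of simple} in the case $G=\U^*(2n)$, for which $H^\CC=\Sp(2n,\CC)$, and to identify the group $\ker(\iota)\cap Z(H^\CC)$ explicitly as a copy of $\Z/2$. Once this identification is made the lemma follows immediately, since by definition $(W,\Omega,\varphi)$ is simple precisely when $\Aut(W,\Omega,\varphi)$ equals this group; so the entire content is a computation of the center of $\Sp(2n,\CC)$ together with its interaction with the isotropy representation.

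First I would record the standard fact that the center of the complex symplectic group is $Z(\Sp(2n,\CC))=\{\pm I_{2n}\}\cong\Z/2$. Next I would observe that the isotropy representation $\iota\colon\Sp(2n,\CC)\to\Aut(\liemc)$ is nothing but the restriction to $\Sp(2n,\CC)\subset\GL(2n,\CC)$ of the adjoint (that is, conjugation) action of $\GL(2n,\CC)$ on $\liegc=\gl(2n,\CC)=\liehc\oplus\liemc$; explicitly, $\iota(g)\xi=g\xi g^{-1}$ for $\xi\in\liemc\subset\End(\mathbb{W})$. From this formula it is transparent that the scalar matrices $\pm I_{2n}$ act as the identity on $\liemc$, so $\{\pm I_{2n}\}\subseteq\ker(\iota)$. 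Intersecting with the center, and using that the center consists of exactly these two scalars, yields $\ker(\iota)\cap Z(\Sp(2n,\CC))=\{\pm I_{2n}\}\cong\Z/2$.

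To finish I would translate this back into the vector-bundle language: the two central elements correspond to $\pm\mathrm{id}_W$, which always preserve $\Omega$ (as $(\pm\mathrm{id}_W)^t\omega(\pm\mathrm{id}_W)=\omega$) and commute with $\varphi$, so that $\{\pm\mathrm{id}_W\}\cong\Z/2$ is a subgroup of $\Aut(W,\Omega,\varphi)$ for every $\U^*(2n)$-Higgs bundle. Combining this with the previous paragraph, Definition \ref{general definition of simple} reads: $(W,\Omega,\varphi)$ is simple if and only if $\Aut(W,\Omega,\varphi)=\{\pm\mathrm{id}_W\}\cong\Z/2$, which is the assertion of the lemma. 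I do not expect a genuine obstacle here; the only point that requires any care is verifying that the \emph{entire} center lies in $\ker(\iota)$, so that the intersection is the full $\Z/2$ rather than being possibly trivial, and this is exactly what the conjugation formula for $\iota$ guarantees.
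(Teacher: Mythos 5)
Your proposal is correct and follows exactly the route the paper intends: the paper states this lemma as an immediate specialization of Definition \ref{general definition of simple} to $G=\U^*(2n)$ without writing out a proof, and your computation that $Z(\Sp(2n,\CC))=\{\pm I_{2n}\}$ lies in $\ker(\iota)$ (since $\iota$ is conjugation on $\liemc$, under which scalars act trivially) is precisely the implicit argument. Nothing is missing.
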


\begin{corollary}\label{cor:stable and simple imply smooth as GL}
 Let $(W,\Omega,\varphi)$ be a stable $\U^*(2n)$-Higgs bundle. Then $(W,\Omega,\varphi)$ is simple if and only if the $\GL(2n,\CC)$-Higgs bundle $(W,\varphi)$ is stable.
\end{corollary}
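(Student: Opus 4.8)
The plan is to compute $\Aut(W,\Omega,\varphi)$ and compare it with $\Aut(W,\varphi)$, using the Lemma that identifies simplicity of a $\U^*(2n)$-Higgs bundle with the condition $\Aut(W,\Omega,\varphi)=\Z/2$, together with the structural description of stable $\U^*(2n)$-Higgs bundles in Proposition \ref{stable U*(2n)-Higgs bundles}. The starting observation is the isomorphism criterion from the Remark on isomorphisms: an automorphism of $(W,\Omega,\varphi)$ is precisely an isomorphism $f\colon W\to W$ satisfying $\varphi f=(f\otimes 1_K)\varphi$ and $\omega=f^t\omega f$. The first condition says that $f$ is an automorphism of the $\GL(2n,\CC)$-Higgs bundle $(W,\varphi)$, and the second says that $f$ preserves $\Omega$, i.e. $\Omega(fv,fw)=\Omega(v,w)$. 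Hence $\Aut(W,\Omega,\varphi)$ is exactly the subgroup of $\Omega$-preserving elements of $\Aut(W,\varphi)$, and the whole statement reduces to controlling $\Aut(W,\varphi)$ under the stability hypotheses.

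First I would dispose of the direction in which $(W,\varphi)$ is stable as a $\GL(2n,\CC)$-Higgs bundle. Then $(W,\varphi)$ is simple, so $\Aut(W,\varphi)=\CC^*\cdot\Id$. Intersecting with the $\Omega$-preserving condition, a scalar $\lambda\,\Id$ satisfies $\Omega(\lambda v,\lambda w)=\lambda^2\Omega(v,w)=\Omega(v,w)$ if and only if $\lambda^2=1$. Therefore $\Aut(W,\Omega,\varphi)=\{\pm\Id\}\cong\Z/2$, and by the Lemma $(W,\Omega,\varphi)$ is simple.

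For the converse I would argue by contraposition, assuming $(W,\varphi)$ is not stable. Since $(W,\Omega,\varphi)$ is stable, Proposition \ref{stable U*(2n)-Higgs bundles} gives a decomposition $(W,\Omega,\varphi)=\bigoplus_{i=1}^m(W_i,\Omega_i,\varphi_i)$ into $\U^*(\rk W_i)$-Higgs bundles with $(W_i,\varphi_i)$ stable and pairwise nonisomorphic; in particular the summands are mutually $\Omega$-orthogonal and each satisfies $\deg W_i=0=\deg W$. Because each $W_i$ is then a proper $\varphi$-invariant subbundle of the same slope as $W$ whenever $m\geq 2$, the $\GL(2n,\CC)$-Higgs bundle $(W,\varphi)$ is stable exactly when $m=1$; so the failure of stability forces $m\geq 2$. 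In that case I would exhibit an automorphism distinct from $\pm\Id$: let $f$ act as $+\Id$ on $W_1$ and as $-\Id$ on $\bigoplus_{i\geq 2}W_i$. It commutes with $\varphi$ since each summand is $\varphi$-invariant, and it preserves $\Omega$ because the summands are $\Omega$-orthogonal and each block is scaled by $\pm 1$ (so the cross terms vanish on both sides and the diagonal terms are multiplied by $(\pm1)^2=1$). As $m\geq 2$, we have $f\neq\pm\Id$, whence $\Aut(W,\Omega,\varphi)\supsetneq\Z/2$ and $(W,\Omega,\varphi)$ is not simple.

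I do not expect a genuine obstacle here, as the corollary is essentially a bookkeeping consequence of Proposition \ref{stable U*(2n)-Higgs bundles} and the isomorphism criterion; the only points requiring care are the translation of ``$f$ preserves $\Omega$'' into the constraint $\lambda^2=1$ on scalars (which produces the factor $\Z/2$) and the explicit identification ``$(W,\varphi)$ stable $\iff m=1$'', both of which follow at once from $\deg W_i=0$ and the fact that a stable $\GL(2n,\CC)$-Higgs bundle is simple.
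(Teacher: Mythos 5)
Your proof is correct and follows essentially the same route as the paper: both rest on the decomposition of Proposition \ref{stable U*(2n)-Higgs bundles} and on identifying the $\Omega$-preserving automorphisms, the paper computing $\Aut(W,\Omega,\varphi)=(\Z/2)^r$ outright while you split into two directions and, for the converse, merely exhibit one automorphism beyond $\pm\Id$. The only difference is organizational; no gap.
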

\proof
Since $(W,\Omega,\varphi)$ is stable, we have, from Proposition \ref{stable U*(2n)-Higgs bundles},
\begin{equation}\label{eq:decomposition of U*-Higgs}
(W,\Omega,\varphi)=\bigoplus_{i=1}^r(W_i,\Omega_i,\varphi_i),
\end{equation}
so $$(W,\varphi)=\bigoplus_{i=1}^r(W_i,\varphi_i)$$ where $(W_i,\varphi_i)$ are stable Higgs bundles. Since stable Higgs bundles are simple (cf. \cite{hitchin:1987}) then $\Aut(W_i,\varphi_i)=\CC^*$. This means that, for each $i$, $\Aut(W_i,\Omega_i,\varphi_i)=\Z/2$, because the automorphisms must preserve the symplectic form $\Omega_i$. From (\ref{eq:decomposition of U*-Higgs}), we have therefore $$\Aut(W,\Omega,\varphi)=(\Z/2)^r.$$ It follows that $(W,\Omega,\varphi)$ is simple if and only if $r=1$ i.e. $(W,\varphi)$ is a stable Higgs bundle.
\endproof

Now the description of stable and non-simple $\U^*(2n)$-Higgs bundles is immediately obtained.

\begin{proposition}\label{prop:description of stable and non-simple}
 A $\U^*(2n)$-Higgs bundle is stable and non-simple if and only if it decomposes as a direct sum of stable and simple $\U^*(2n)$-Higgs bundles. In other words, a $\U^*(2n)$-Higgs bundle $(W,\Omega,\varphi)$ is stable and non-simple if and only if $$(W,\Omega,\varphi)=\bigoplus_{i=1}^r(W_i,\Omega_i,\varphi_i)$$ where $(W_i,\Omega_i,\varphi_i)$ are stable and simple $\U^*(\rk(W_i))$-Higgs bundles and $r>1$.
\end{proposition}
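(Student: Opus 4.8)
The plan is to deduce the proposition formally from the two preceding results, Proposition~\ref{stable U*(2n)-Higgs bundles} (which describes the decomposition of a stable $\U^*(2n)$-Higgs bundle) and Corollary~\ref{cor:stable and simple imply smooth as GL} (whose proof computes the automorphism group of a stable object). No new geometric input should be needed; the work is to interleave these statements correctly and to keep track of the stability of the individual summands.

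For the forward direction I would begin with a stable, non-simple $(W,\Omega,\varphi)$ and apply Proposition~\ref{stable U*(2n)-Higgs bundles} to write $(W,\Omega,\varphi)=\bigoplus_{i=1}^r(W_i,\Omega_i,\varphi_i)$ with the $(W_i,\varphi_i)$ pairwise nonisomorphic stable $\GL(\rk(W_i),\CC)$-Higgs bundles. Each factor is then stable as a $\U^*(\rk(W_i))$-Higgs bundle: since $\deg W_i=0$, stability of $(W_i,\varphi_i)$ forces $\deg W'<0$ for every proper $\varphi_i$-invariant $W'\subsetneq W_i$, in particular for the isotropic ones, and semistability follows from Proposition~\ref{prop:equivalence of semistability}, so Proposition~\ref{prop:orthogonal-stability} yields stability. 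By Corollary~\ref{cor:stable and simple imply smooth as GL} each $(W_i,\Omega_i,\varphi_i)$ is moreover simple. Finally, the identity $\Aut(W,\Omega,\varphi)=(\Z/2)^r$ established in the proof of Corollary~\ref{cor:stable and simple imply smooth as GL} shows that simplicity is equivalent to $r=1$; hence non-simplicity forces $r>1$, giving exactly the claimed decomposition.

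For the converse I would take $(W,\Omega,\varphi)=\bigoplus_{i=1}^r(W_i,\Omega_i,\varphi_i)$ with each $(W_i,\Omega_i,\varphi_i)$ stable and simple and $r>1$. By Corollary~\ref{cor:stable and simple imply smooth as GL} each underlying $(W_i,\varphi_i)$ is a stable $\GL(\rk(W_i),\CC)$-Higgs bundle, and Proposition~\ref{stable U*(2n)-Higgs bundles} then gives that $(W,\Omega,\varphi)$ is stable, while $\Aut(W,\Omega,\varphi)=(\Z/2)^r$ with $r>1$ shows it is non-simple.

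The main obstacle, and the only genuinely delicate point, is the hypothesis required to invoke Proposition~\ref{stable U*(2n)-Higgs bundles} in the converse: that proposition demands the $(W_i,\varphi_i)$ be pairwise \emph{nonisomorphic} as $\GL$-Higgs bundles, and this cannot be omitted. Indeed, if $(W_i,\varphi_i)\cong(W_j,\varphi_j)$ for some $i\neq j$, then the embedding $w\mapsto(w,\sqrt{-1}\,w)$ used in the proof of Proposition~\ref{stable U*(2n)-Higgs bundles} produces a degree-$0$ isotropic $\varphi$-invariant subbundle, which destroys stability. Accordingly, I would make explicit that the decomposition in the statement is to be read with pairwise nonisomorphic $\GL$-factors: in the forward direction this is supplied automatically by Proposition~\ref{stable U*(2n)-Higgs bundles}, and in the converse it is precisely the condition that must be assumed for the sum to remain stable.
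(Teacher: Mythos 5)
Your proof is correct and follows exactly the route the paper intends: the paper presents this proposition as ``immediately obtained'' from Proposition~\ref{stable U*(2n)-Higgs bundles} and Corollary~\ref{cor:stable and simple imply smooth as GL}, and your argument is precisely the interleaving of those two results together with the computation $\Aut(W,\Omega,\varphi)=(\Z/2)^r$ from the proof of the corollary. Your observation that the converse direction needs the underlying $\GL$-Higgs bundles of the summands to be pairwise nonisomorphic (lest the graph $w\mapsto(w,\sqrt{-1}\,w)$ produce a degree-zero isotropic invariant subbundle) is a genuine precision that the paper's statement leaves implicit.
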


The following result will be important below. It is straightforward from Proposition \ref{prop:stable, simple and H2=0 imply smooth}, from the fact that the complexification of $\U^*(2n)$ is $\GL(2n,\CC)$ and from Corollary \ref{cor:stable and simple imply smooth as GL}:

Let $\M_{\U^*(2n)}$ denote the moduli space of polystable $\U^*(2n)$-Higgs bundles.

\begin{proposition}\label{prop:stable and simple imply smooth}
A stable and simple $\U^*(2n)$-Higgs bundle corresponds to a smooth point of the moduli space $\M_{\U^*(2n)}$.
\end{proposition}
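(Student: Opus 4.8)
The plan is to derive the proposition as a direct concatenation of the results already established, since the excerpt signals that it is ``straightforward''. First I would recall the two ingredients that make the reduction possible: that the complexification of $G=\U^*(2n)$ is $G^\CC=\GL(2n,\CC)$, and that, by the explicit description recorded just before Proposition \ref{prop:equivalence of semistability}, the $G^\CC$-Higgs bundle associated to a $\U^*(2n)$-Higgs bundle $(W,\Omega,\varphi)$ is simply the $\GL(2n,\CC)$-Higgs bundle $(W,\varphi)$ obtained by forgetting the symplectic form $\Omega$.

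Next, starting from a stable and simple $\U^*(2n)$-Higgs bundle $(W,\Omega,\varphi)$, I would apply Corollary \ref{cor:stable and simple imply smooth as GL}. Because $(W,\Omega,\varphi)$ is stable and simple, that corollary yields that the associated $\GL(2n,\CC)$-Higgs bundle $(W,\varphi)$ is stable. In view of the identification in the previous paragraph, this says exactly that $(W,\Omega,\varphi)$ is stable when viewed as a $G^\CC$-Higgs bundle.

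Finally, I would invoke the second (``in particular'') assertion of Proposition \ref{prop:stable, simple and H2=0 imply smooth}, namely that a simple $G$-Higgs bundle which is stable as a $G^\CC$-Higgs bundle corresponds to a smooth point of the moduli space. Since $(W,\Omega,\varphi)$ is simple as a $\U^*(2n)$-Higgs bundle and, by the previous step, stable as a $\GL(2n,\CC)$-Higgs bundle, this immediately gives that it is a smooth point of $\M_{\U^*(2n)}$, completing the argument.

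There is no genuine obstacle in this proof; the only point deserving attention is the bookkeeping identification guaranteeing that ``stable as a $G^\CC$-Higgs bundle'' in Proposition \ref{prop:stable, simple and H2=0 imply smooth} coincides with ``$(W,\varphi)$ is a stable $\GL(2n,\CC)$-Higgs bundle'' in the sense used in Corollary \ref{cor:stable and simple imply smooth as GL}. This is precisely what the explicit construction of the associated $G^\CC$-Higgs bundle provides, so once that identification is in place the statement follows by clean composition of the cited results.
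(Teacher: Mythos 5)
Your argument is exactly the paper's intended proof: the paper states that the proposition follows from Corollary \ref{cor:stable and simple imply smooth as GL} (stability and simplicity of $(W,\Omega,\varphi)$ imply stability of the associated $\GL(2n,\CC)$-Higgs bundle $(W,\varphi)$) combined with the ``in particular'' clause of Proposition \ref{prop:stable, simple and H2=0 imply smooth}, which is precisely the chain of reasoning you give. Your extra remark about identifying the associated $G^\CC$-Higgs bundle with $(W,\varphi)$ is the right bookkeeping point and matches the construction recorded in the paper just before Proposition \ref{prop:equivalence of semistability}.
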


So, from \cite{schmitt:2008}, at a point of $\M_{\U^*(2n)}$ represented by a stable and simple object, there exists a local universal family, hence the dimension of the component of $\M_{\U^*(2n)}$ containing that point is the expected dimension given by (\ref{eq:expected dimension}), which, for $G=\U^*(2n)$ is easily seen to be equal to $$4n^2(g-1).$$

\subsection{Polystable $\U^*(2n)$-Higgs bundles}

Now we look at polystable $\U^*(2n)$-Higgs bundles. First notice that we can realize $\GL(n,\CC)$ as a subgroup of $\U^*(2n)$, using the injection $$A\mapsto\begin{pmatrix}
A & 0\\
0 & \overline A\end{pmatrix}.$$ When restricted to the unitary group $\U(n)\subset\GL(n,\CC)$ we obtain the injection
$$A\mapsto\begin{pmatrix}
A & 0\\
0 & (A^t)^{-1}\end{pmatrix}.$$

\begin{theorem}\label{thm:description of polystable}
 Let $(W,\Omega,\varphi)$ be a polystable $\U^*(2n)$-Higgs bundle. There is a decomposition of $(W,\Omega,\varphi)$ as a sum of stable $G_i$-Higgs bundles, where $G_i$ is one of the following subgroups of $\U^*(2n)$: $\U^*(2n_i)$, $\GL(n_i,\CC)$, $\Sp(2n_i)$ or $\U(n_i)$ ($n_i\leq n$).
\end{theorem}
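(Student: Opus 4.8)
The plan is to combine the structural decomposition of polystable objects with the already-developed description of the stable ones, so that everything reduces to identifying which of the listed subgroups occurs as the ``stabiliser'' of each summand.

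First I would invoke the polystability criterion of Proposition \ref{prop:orthogonal-stability}: starting from a nonzero isotropic (or coisotropic) $\varphi$-invariant subbundle $W'$ of degree $0$, polystability produces a complementary $\varphi$-invariant subbundle $W''$ with $W=W'\oplus W''$. Iterating on the rank, as in the proof of Proposition \ref{stable U*(2n)-Higgs bundles}, I expect to reduce to finitely many indecomposable $\varphi$-invariant summands, each of which is itself polystable and admits no further proper $\varphi$-invariant degree-zero splitting. The key point is to analyse the interaction of each such summand with the symplectic form $\Omega$. There are two cases. If a summand $W_i$ is non-isotropic and non-degenerate for $\Omega$, then $\Omega$ restricts to a symplectic form $\Omega_i$ on it and $(W_i,\Omega_i,\varphi_i)$ is itself a (polystable, and by indecomposability stable) $\U^*(2n_i)$-Higgs bundle. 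If instead a summand is isotropic, then by polystability it is paired with another isotropic summand $W_j$ under $\Omega$, and on the sum $W_i\oplus W_j$ the form $\Omega$ realises $W_j\cong W_i^*$, so the structure group of this block reduces to $\GL(n_i,\CC)$ embedded in $\U^*(2n)$ via the injection $A\mapsto\mathrm{diag}(A,\overline A)$ displayed before the theorem.

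Next I would account for the remaining two groups $\Sp(2n_i)$ and $\U(n_i)$, which arise exactly when the Higgs field on a block vanishes. Recall that a $\U^*(2n_i)$-Higgs bundle with $\varphi_i=0$ is just a polystable $\Sp(2n_i,\CC)$-principal bundle, i.e.\ the Higgs bundle for the compact group $\Sp(2n_i)$; and a $\GL(n_i,\CC)$-block with vanishing Higgs field is a polystable vector bundle, which is the Higgs bundle for the compact form $\U(n_i)$, consistent with the second displayed injection $A\mapsto\mathrm{diag}(A,(A^t)^{-1})$. Thus the four groups in the statement correspond precisely to the four possibilities (symplectic or linear block)$\times$(nonzero or zero Higgs field). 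It remains to check that each indecomposable block is actually \emph{stable} for its own structure group; this follows because any proper invariant subobject of a block would, by the splitting-off argument, contradict the indecomposability we arranged in the first step.

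The main obstacle I anticipate is the bookkeeping in the isotropic case: one must verify that the pairing of an isotropic summand $W_i$ with a complementary isotropic $W_j$ under $\Omega$ is honestly nondegenerate between the two, so that $\Omega$ identifies $W_j$ with $W_i^*$ and the pair genuinely carries a $\GL(n_i,\CC)$-structure rather than some degenerate configuration, and that the Higgs field respects this identification via the symmetry relation (\ref{symmetric higgs field}). Handling the coisotropic case symmetrically (passing to $W'^{\perp_\Omega}$ as in the converse arguments of Proposition \ref{prop:orthogonal-stability}) and ensuring the induction terminates with all blocks simultaneously stable are the places where care is needed; the rest is a direct translation between the vector-bundle data $(W,\Omega,\varphi)$ and the principal-bundle data for each $G_i$.
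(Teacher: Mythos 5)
Your proposal is correct and follows essentially the same route as the paper: obtain an $\Omega$-orthogonal, $\varphi$-invariant decomposition from polystability, identify each block as a $\U^*(2n_i)$- or $\GL(n_i,\CC)$-Higgs bundle according to whether $\Omega$ restricts nondegenerately to it or pairs it with a dual isotropic block (with $\Sp(2n_i)$ and $\U(n_i)$ arising exactly when the Higgs field on the block vanishes), and iterate until all summands are stable. The only difference is presentational: the paper extracts the full graded decomposition in one step from the filtration criterion of Proposition \ref{prop:u*2n-alpha-stability} and cites the Hitchin--Kobayashi correspondence for polystability of the summands, whereas you split off one degree-zero invariant subbundle at a time via Proposition \ref{prop:orthogonal-stability} and induct on the rank.
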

\proof
Since $(W,\Omega,\varphi)$ is polystable, we know, from Proposition \ref{prop:u*2n-alpha-stability}, that for every filtration $$\mathcal W=(0=W_0\subsetneq W_1\subsetneq W_2\subsetneq\dots\subsetneq W_k=W)$$ such that $W_{k-j}=W_j^{\perp_\Omega}$, and any
 $$\lambda\in\{(\lambda_1,\lambda_2,\dots,\lambda_k)\in\R^k
\mid \lambda_i<\lambda_{i+1}\text{ and
}\lambda_{k-i+1}=-\lambda_i\text{ for any }i\},$$ such that $\varphi\in H^0(N(\mathcal W,\lambda))$ and
 $d(\mathcal W,\lambda)=0$, there is an isomorphism
 \begin{equation}\label{eq:polystable decomposition of W}
W\simeq W_1\oplus W_2/W_1\oplus\dots\oplus W_k/W_{k-1}  
 \end{equation}
 such that
\begin{equation}\label{eq:orthogonal decomposition of Omega}
\Omega(W_i/W_{i-1},W_j/W_{j-1})=0,\ \text{ unless }\ i+j=k+1
\end{equation}
 and that, via this isomorphism,
\begin{equation}\label{eq:orthogonal decomposition of Higgs field}
\varphi\in H^0\bigg(\bigoplus_i\End(W_i/W_{i-1})\otimes K\bigg).
\end{equation}

Now we analyze the possible cases. Conditions (\ref{eq:orthogonal decomposition of Omega}) and (\ref{eq:orthogonal decomposition of Higgs field}) tell us that, with respect to decomposition (\ref{eq:polystable decomposition of W}), we have
\begin{equation}\label{eq:matrix representation of symplectic form}
 \omega=\begin{pmatrix}
    0 & 0 & 0 & \dots & -\omega_1^t \\
    \vdots &  \dots & 0 & \dots & \vdots \\
    0 & 0 &  \dots &  \dots & 0\\
    0 & \omega_2 & 0 & \dots & 0 \\
    \omega_1 & 0 & 0 & \dots & 0\
  \end{pmatrix},
\end{equation} where $\omega_i:W_i/W_{i-1}\stackrel{\cong}{\longrightarrow}(W_{k+1-i}/W_{k-i})^*$ is the isomorphism induced by $\Omega$, and that $$\varphi(W_i/W_{i-1})\subset W_i/W_{i-1}\otimes K,$$ for all $i=1,\ldots,k$, so we write $$\varphi_i=\varphi|_{W_i/W_{i-1}}.$$

Hence, if $i\neq \frac{k+1}{2}$, from (\ref{eq:orthogonal decomposition of Omega}), the symplectic form $\Omega$ does not restricts to a symplectic form on $W_i/W_{i-1}$, and we deduce that $$(W_i/W_{i-1},\varphi_i)$$ is a $\GL(\rk(W_i/W_{i-1}),\CC)$-Higgs bundle, being a $\U(\rk(W_i/W_{i-1}))$-Higgs bundle if and only if $\varphi_i=0$.

On the other hand, the symplectic form $\Omega$ restricts to a symplectic form $\Omega_{\frac{k+1}{2}}$ on $W_{\frac{k+1}{2}}/W_{\frac{k-1}{2}}$, and we deduce that $$(W_\frac{k+1}{2}/W_{\frac{k-1}{2}},\Omega_{\frac{k+1}{2}},\varphi_{\frac{k+1}{2}})$$ is a $\U^*(\rk(W_\frac{k+1}{2}/W_{\frac{k-1}{2}}))$-Higgs bundle, being a $\Sp(\rk(W_\frac{k+1}{2}/W_{\frac{k-1}{2}}))$-Higgs bundle if and only if $\varphi_{\frac{k+1}{2}}=0$. Of course, this case can only occur if $k$ is odd.

Each summand in this decomposition is also polystable (one way of seeing this is by using the Hitchin-Kobayashi correspondence between polystable $G$-Higgs bundles and solutions to the Hitchin equations; cf. \cite{garcia-gothen-mundet:2008}). Hence, for each summand which is a $\Sp(2n_i)$- or $\GL(n_i,\CC)$- or $\U(n_i)$-Higgs bundle we know that we can continue the process for these groups, until we obtain a decomposition where all summands are stable $\Sp(2n_i)$- or $\GL(n_i,\CC)$- or $\U(n_i)$-Higgs bundles: for $\U(n_i)$-Higgs bundles (i.e. holomorphic vector bundles) this is proved in \cite{seshadri:1967}; the proof for the case of $\GL(n,\CC)$-Higgs bundles can be found in \cite{nitsure:1991} and for $\Sp(2n)$-Higgs bundles (i.e. symplectic vector bundles) this is proved in \cite{hitching:2005} (see also \cite{ramanan:1981}).
On the other hand, for $\U^*(2n_i)$-Higgs bundle we simply iterate the above process. Finally we obtain a decomposition where all summands are stable $G_i$-Higgs bundles.
\endproof

\section{The Hitchin proper functional and the minima subvarieties}\label{morse quadruples}

Here we use the method introduced by Hitchin in \cite{hitchin:1987} to
study the topology of moduli space $\M_G$ of $G$-Higgs bundles.

Define $$f:\M_G(c)\longrightarrow\R$$ by
\begin{equation}\label{proper function}
f(E_{H^\CC},\varphi)=\|\varphi\|_{L^2}^2=\int_{X}|\varphi|^2\mathrm{dvol}.
\end{equation}
This function $f$ is usually called the \emph{Hitchin functional}.

Here we are using the \emph{harmonic metric} (cf. \cite{corlette:1988,donaldson:1987}) on $E_{H^\CC}$ to define
$\|\varphi\|_{L^2}$. So we are using the identification between $\M_G(c)$ with the space of gauge-equivalent solutions
to Hitchin's equations. We opt to work with $\M_G(c)$, because in this case we have more algebraic tools at our disposal. We shall make use of the tangent space of $\M_G(c)$, and we know from \cite{hitchin:1987} that the above identification induces a diffeomorphism between the corresponding tangent spaces.

Hitchin proved in \cite{hitchin:1987, hitchin:1992} that the function $f$ is proper and therefore it attains a minimum on each closed subspace of $\M_G=\bigcup_c\M_G(c)$. Moreover, we have the following result from general topology.
\begin{proposition}\label{proper}
Let $\M'\subseteq\M_G$ be a closed subspace and let $\N'\subset\M'$ be the subspace of local minima of $f$ on $\M'$. If $\N'$ is connected then so is $\M'$.
\end{proposition}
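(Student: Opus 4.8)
The plan is to argue by contradiction, exploiting the two features of the Hitchin functional that make the Morse-theoretic machinery work: it is nonnegative by its very definition \eqref{proper function}, and it is proper, as proved by Hitchin in \cite{hitchin:1987,hitchin:1992}. The slogan is that a proper, bounded-below function attains a minimum on every closed piece of $\M_G$, and each such minimum is forced to sit inside $\N'$.

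First I would record the basic reduction. Since $\M'\subseteq\M_G$ is closed and $f$ is proper on $\M_G$, the restriction $f|_{\M'}$ is again proper and bounded below. More importantly, the same holds for the restriction of $f$ to any closed subset of $\M'$; this is the only analytic input needed, and everything else is point-set topology.

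Next I would suppose, for contradiction, that $\M'$ is not connected, and pick any connected component $C$ of $\M'$. As a component, $C$ is closed in $\M'$, hence closed in $\M_G$, so $f|_C$ is proper and bounded below and therefore attains its infimum at some point $p\in C$. The key observation is that $C$ is \emph{open} in $\M'$; consequently a neighbourhood of $p$ in $\M'$ lies entirely in $C$, and so $p$ is not merely the minimum of $f$ on $C$ but a \emph{local} minimum of $f$ on all of $\M'$. Thus $p\in\N'$, and we conclude that every connected component of $\M'$ meets $\N'$.

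Finally I would invoke the hypothesis that $\N'$ is connected: a connected subset of $\M'$ must lie inside a single connected component, yet we have just shown that $\N'$ meets each of the (at least two, by assumption) components. This contradiction forces $\M'$ to consist of a single component, i.e. $\M'$ is connected. The one point that deserves care—and which I regard as the main (if modest) obstacle—is the claim that a connected component of $\M'$ is open. This is where the geometry enters: $\M_G$ is a complex analytic variety, hence locally connected in the analytic topology, and local connectedness passes to the subspace $\M'$, so its components are indeed open. Granting this, the argument above closes cleanly.
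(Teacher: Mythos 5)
The paper offers no proof of this proposition at all---it is stated as ``the following result from general topology'' immediately after recalling that $f$ is proper---so your task was to supply the standard argument, and your overall strategy (properness plus boundedness below forces a minimum on each closed piece, and that minimum is a local minimum of $f$ on $\M'$, hence lies in $\N'$) is exactly the intended one. However, the step you yourself flag as the main obstacle is justified incorrectly. Local connectedness does \emph{not} pass to closed subspaces: the closed topologist's sine curve $\{(x,\sin(1/x)):x>0\}\cup(\{0\}\times[-1,1])$ is a closed subset of the locally connected space $\R^2$ that fails to be locally connected, and its component structure is exactly the kind that would break your argument (a component that is not open, whose minimum point need not be a local minimum of $f$ on the whole subspace). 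Since the proposition is asserted for an \emph{arbitrary} closed $\M'\subseteq\M_G$, you cannot appeal to local connectedness of $\M'$.

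The repair is easy and makes the detour through connected components unnecessary. If $\M'$ is disconnected, write $\M'=A\sqcup B$ with $A,B$ nonempty and both open and closed in $\M'$; this is the definition of disconnectedness and requires no local connectedness. Each of $A$, $B$ is closed in $\M_G$, so $f$ attains its infimum on each, say at $p\in A$ and $q\in B$. Because $A$ is open in $\M'$, it is itself an $\M'$-neighbourhood of $p$ on which $f(p)$ is minimal, so $p$ is a local minimum of $f$ on $\M'$, i.e.\ $p\in\N'\cap A$; likewise $q\in\N'\cap B$. Then $\N'\cap A$ and $\N'\cap B$ are nonempty, disjoint, and open in $\N'$, contradicting the connectedness of $\N'$. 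With this substitution your proof closes correctly; as written, the justification of the key openness claim is false.
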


In our case, the Hitchin functional $$f:\M_{\U^*(2n)}\longrightarrow\R$$
is given by
\begin{equation}\label{Hitchin proper function quadruples}
f(W,\Omega,\varphi)=\|\varphi\|_{L^2}^2=\frac{\sqrt{-1}}{2}\int_{X}\mathrm{tr}(\varphi\wedge\varphi^*)\mathrm{dvol}.
\end{equation}


Recall from Proposition \ref{prop:stable and simple imply smooth} which guarantees that a stable and simple $\U^*(2n)$-Higgs bundle represents a smooth point on $\M_{\U^*(2n)}$.

Away from the singular locus of
$\M_{\U^*(2n)}$, the Hitchin functional $f$ is a moment map for the Hamiltonian
$S^1$-action on $\M_{\U^*(2n)}$ given by
\begin{equation}\label{circle action Higgs}
(V,\varphi)\mapsto(V,e^{\sqrt{-1}\theta}\varphi).
\end{equation}
From this it follows immediately that a smooth point of $\M_{\U^*(2n)}$ is a
critical point of $f$ if and only if is a fixed point of the
$S^1$-action. Let us then study the fixed point set of the given
action (this is analogous to \cite{hitchin:1992} and \cite{bradlow-garcia-prada-gothen:2004}).

Let $(W,\Omega,\varphi)$ represent a stable and simple (hence smooth) fixed point. Then either $\varphi=0$
or (since the action is on $\M_{\U^*(2n)}$) there is a
one-parameter family of gauge transformations
$g(\theta)$ such that
$g(\theta)\cdot(W,\Omega,\varphi)=(W,\Omega,e^{\sqrt{-1}\theta}\varphi)$.

In the latter case, let
\begin{equation}\label{eq:infinitesimal gauge}
\psi=\frac{d}{d\theta}g(\theta)|_{\theta=0}
\end{equation}
be the infinitesimal gauge
transformation generating this family. $(W,\Omega,\varphi)$ is then what is called a \emph{complex variation of
Hodge structure} or a \emph{Hodge bundle} (cf. \cite{hitchin:1987,hitchin:1992,simpson:1992}). This means that
$$(W,\varphi)=\Big(\bigoplus F_j,\sum\varphi_j\Big)$$ where the $F_j$'s are the eigenbundles of
the infinitesimal gauge transformation $\psi$: over $F_j$,
\begin{equation}\label{psi over Fj}
\psi=\sqrt{-1}j\in\CC,
\end{equation}
and where $\varphi_j=\varphi|_{F_j}$ is a map
\begin{equation}\label{eq:Hodge decomposition of varphi}
\varphi_j:F_j\longrightarrow F_{j+1}\otimes K.
\end{equation}

Since $g(\theta)$ is an automorphism of $(W,\Omega)$, it follows from (\ref{eq:infinitesimal gauge}) that $\psi$ is
skew-symmetric with respect to $\Omega$. Thus, using (\ref{psi over Fj}) we have that, if $v_j\in F_j$ and
$v_i\in F_i$,
$$\sqrt{-1}j\Omega(v_j,v_i)=\Omega(\psi v_j,v_i)=-\Omega(v_j,\psi v_i)=-\sqrt{-1}l\Omega(v_j,v_i).$$
Then $F_j$ and $F_i$ are therefore orthogonal under $\Omega$ unless $i+j=0$, and therefore $\omega:W\to W^*$ yields an isomorphism
\begin{equation}\label{FjcongF-j*L}
\omega_j=\omega|_{F_j}:F_j\stackrel{\cong}{\longrightarrow}F_{-j}^*.
\end{equation}
This means that
\begin{equation}\label{eq:decomposition of W}
W=F_{-m}\oplus\dots\oplus F_m
\end{equation} for some $m\geq 1/2$ integer or half-integer.

Using these isomorphisms and (\ref{symmetric higgs field}), we see that $$(\varphi_{-j-1}^t\otimes
1_K)\omega_j=(\omega_{j+1}\otimes 1_K)\varphi_j$$ for $j\in\{-m,\dots,m\}$.

The Cartan decomposition of $\liegc$ induces a decomposition of vector bundles
$$E_{H^\CC}(\liegc)=E_{H^\CC}(\liehc)\oplus E_{H^\CC}(\liemc)$$
where $E_{H^\CC}(\liegc)$ (resp. $E_{H^\CC}(\liehc)$) is the adjoint bundle, associated to the adjoint representation of $H^\CC$ on $\liegc$ (resp. $\liehc$).
For the group $\U^*(2n)$, we have
$E_{H^\CC}(\liegc)=\End(W)$ and
we already know that
$E_{H^\CC}(\liehc)=\Lambda_\Omega^2W$ and
$E_{H^\CC}(\liemc)=S_\Omega^2W$.
The involution in $\End(W)$ defining the above decomposition
is $\theta:\End(W)\to\End(W)$ defined by
\begin{equation}\label{involution theta}
\theta(A)=-(\omega A\omega^{-1})^t.
\end{equation}
Its $+1$-eigenbundle is $\Lambda_\Omega^2W$ and its
$-1$-eigenbundle is $S_\Omega^2W$.

We also have a decomposition of this vector bundle as
\begin{equation}\label{eq:decomposition of End}
\End(W)=\bigoplus_{k=-2m}^{2m}U_k
\end{equation}
where
$$U_k=\bigoplus_{i-j=k}\Hom(F_j,F_i).$$ From (\ref{psi over Fj}), this is the $\sqrt{-1}k$-eigenbundle for the adjoint action
$\ad(\psi):\End(W)\to\End(W)$
of $\psi$. We say that $U_k$ is the subspace of
$\End(W)$ with \emph{weight} $k$.

Write
$$U_{i,j}=\Hom(F_j,F_i).$$ The restriction of the involution
$\theta$, defined in (\ref{involution theta}), to $U_{i,j}$ gives an isomorphism
\begin{equation}\label{theta sends Uij to U-i,-j}
\theta:U_{i,j}\stackrel{\cong}{\longrightarrow} U_{-j,-i}
\end{equation} so $\theta$ restricts to
$$\theta:U_k\longrightarrow U_k.$$

Write $$U^+=\Lambda_\Omega^2W\ \text{ and }\ U^-=S_\Omega^2W$$ so that
$E_{H^\CC}(\liehc)=U^+$ and
$E_{H^\CC}(\liemc)=U^-$. Let also $$U_k^+=U_k\cap U^+$$ and
$$U_k^-=U_k\cap U^-$$ so that $U_k=U_k^+\oplus U_k^-$ is the
corresponding eigenbundle decomposition. Hence $$U^+=\bigoplus_k
U_k^+$$ and $$U^-=\bigoplus_k U_k^-.$$ Observe that $\varphi\in
H^0(U_1^-\otimes K)$.

The map $\ad(\varphi)=[\varphi,-]$ interchanges $U^+$ with $U^-$ and
therefore maps $U_k^\pm$ to $U_{k+1}^\mp\otimes K$. So, for each
$k$, we have a weight $k$ subcomplex of the complex $C^\bullet(W,\Omega,\varphi)$ defined in Proposition \ref{deformation complex for quadruples}:
\begin{equation}\label{eq:subcomplex}
C^\bullet_k(W,\Omega,\varphi):U_k^+\xrightarrow{\ad(\varphi)}U_{k+1}^-\otimes K.
\end{equation}

From Propositions \ref{deformation complex for quadruples} and \ref{prop:stable and simple imply smooth}, if a $\U^*(2n)$-Higgs bundle $(W,\Omega,\varphi)$ is stable and simple, its infinitesimal deformation
space is
$$\HH^1(C^\bullet(W,\Omega,\varphi))=\bigoplus_k\HH^1(C^\bullet_k(W,\Omega,\varphi)).$$
We say that $\HH^1(C^\bullet_k(W,\Omega,\varphi))$ is the subspace of
$\HH^1(C^\bullet(W,\Omega,\varphi))$ with \emph{weight} $k$.

By Hitchin's computations in \cite{hitchin:1992} (see also \cite{garcia-prada-gothen-munoz:2007}), we
have the following result which gives us a way to compute the
eigenvalues of the Hessian of the Hitchin functional $f$ at a smooth critical point.

\begin{proposition}\label{eigen}
Let $(W,\Omega,\varphi)$ be a smooth $\U^*(2n)$-Higgs bundle which represents a critical
point of the Hitchin function $f$. The eigenspace of the Hessian of $f$
corresponding to the eigenvalue $k$ is
$$\HH^1(C^\bullet_{-k}(W,\Omega,\varphi)).$$ In particular,
$(W,\Omega,\varphi)$ is a local minimum of $f$ if and only if
$\HH^1(C^\bullet(W,\Omega,\varphi))$ has no subspaces with
positive weight.
\end{proposition}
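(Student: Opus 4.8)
The plan is to use the fact, recalled just above the statement, that $f$ is the moment map for the Hamiltonian $S^1$-action $(W,\Omega,\varphi)\mapsto(W,\Omega,e^{\sqrt{-1}\theta}\varphi)$ on the smooth locus of $\M_{\U^*(2n)}$, together with the standard principle from K\"ahler geometry (this is precisely Hitchin's computation in \cite{hitchin:1992}) that at a fixed point of a Hamiltonian circle action the Hessian of the moment map acts on the $S^1$-weight-$w$ summand of the tangent space as multiplication by $w$. Since $(W,\Omega,\varphi)$ is smooth and critical, it is such a fixed point, and its tangent space is $T_{(W,\Omega,\varphi)}\M_{\U^*(2n)}\simeq\HH^1(C^\bullet(W,\Omega,\varphi))=\bigoplus_k\HH^1(C^\bullet_k(W,\Omega,\varphi))$. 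The whole statement therefore reduces to identifying the $S^1$-weight on each summand $\HH^1(C^\bullet_k)$ and matching it, via the Hessian principle, with the eigenvalue.

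The first step is to linearize the action at the fixed point. As recorded above, there is a one-parameter family of gauge transformations $g(\theta)=\exp(\theta\psi)$ with $g(\theta)\cdot(W,\Omega,\varphi)=(W,\Omega,e^{\sqrt{-1}\theta}\varphi)$, and since $\varphi\in H^0(U_1^-\otimes K)$ has $\ad(\psi)$-weight $1$ this is exactly the statement $\Ad(g(\theta))\varphi=e^{\sqrt{-1}\theta}\varphi$. The derivative of the action on the tangent space is then computed by scaling the Higgs-field direction by $e^{\sqrt{-1}\theta}$ and applying $\Ad(g(\theta)^{-1})=\exp(-\theta\,\ad\psi)$ to bring the displaced point back to $(W,\Omega,\varphi)$. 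A class in $\HH^1(C^\bullet_k)$ is represented, in the Dolbeault model underlying the long exact sequence of Proposition \ref{deformation complex for quadruples}, by a pair $(\alpha,s)$ with $\alpha\in A^{0,1}(U_k^+)$ and $s\in A^0(U_{k+1}^-\otimes K)$. On $\alpha$ the circle acts trivially while $\Ad(g(\theta)^{-1})$ contributes $e^{-\sqrt{-1}k\theta}$ because $U_k^+$ has $\ad(\psi)$-weight $k$; on $s$ the scaling contributes $e^{\sqrt{-1}\theta}$ and $\Ad(g(\theta)^{-1})$ contributes $e^{-\sqrt{-1}(k+1)\theta}$, whose product is again $e^{-\sqrt{-1}k\theta}$. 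Hence both components transform by $e^{-\sqrt{-1}k\theta}$, so the $S^1$-action has weight $-k$ on $\HH^1(C^\bullet_k)$.

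Combining the two ingredients, the Hessian acts on $\HH^1(C^\bullet_k)$ by its weight $-k$; equivalently, the eigenspace for the eigenvalue $k$ is the weight-$k$ subspace $\HH^1(C^\bullet_{-k}(W,\Omega,\varphi))$, which is the first assertion. For the second, $(W,\Omega,\varphi)$ is a local minimum exactly when the Hessian is positive semi-definite, i.e. has no negative eigenvalue. By the first part a negative eigenvalue $k<0$ occurs precisely when $\HH^1(C^\bullet_{-k})\neq0$ with $-k>0$, that is, precisely when $\HH^1(C^\bullet(W,\Omega,\varphi))$ has a nonzero summand of positive weight. Thus the minimum condition is equivalent to the vanishing of all positive-weight subspaces, as claimed.

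The step I expect to be the main obstacle is pinning down the signs and normalizations in this bookkeeping: one must check that $g(\theta)$ undoes the action with the sign of $\psi$ that gives $\varphi$ weight $+1$, and that the identification of the Hessian eigenvalue with the circle weight (rather than its negative, or a positive multiple) is the one consistent with $f=\norm{\varphi}_{L^2}^2$ being the moment map for the stated action. Once the weights on the two Dolbeault components of $\HH^1(C^\bullet_k)$ are tracked consistently, the conclusion follows from the general K\"ahler moment-map computation of \cite{hitchin:1992}.
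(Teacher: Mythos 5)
Your argument is correct and follows exactly the route the paper takes: the paper offers no proof of this proposition beyond invoking Hitchin's moment-map computation from \cite{hitchin:1992}, and your write-up simply supplies the weight bookkeeping that this citation delegates. In particular, your computation that the circle acts with weight $-k$ on both Dolbeault components of a class in $\HH^1(C^\bullet_k)$, hence that the eigenvalue-$k$ eigenspace of the Hessian is $\HH^1(C^\bullet_{-k})$, is the correct linearization underlying the paper's statement.
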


For each $k$, consider the complex (\ref{eq:subcomplex}) and let $$\chi(C_k^\bullet(W,\Omega,\varphi))=\dim\mathbb{H}^0(C_k^\bullet(W,\Omega,\varphi))-\dim\mathbb{H}^1(C_k^\bullet(W,\Omega,\varphi))+\dim\mathbb{H}^2(C_k^\bullet(W,\Omega,\varphi)).$$

\begin{lemma}\label{lem:adphi isomo iff char=0}
Let $(W,\Omega,\varphi)$ be a stable $\U^*(2n)$-Higgs bundle which corresponds to a critical point of $f$. Then 
$\chi(C_k^\bullet(W,\Omega,\varphi))\leq (g-1)(2\rk(\ad(\varphi)|_{U_k^+})-\rk(U_k^+)-\rk(U_{k+1}^-))$. Furthermore,
$\chi(C_k^\bullet(W,\Omega,\varphi))=0$ if and only if $\mathrm{ad}(\varphi)|_{U_k^+}:U_k^+\to U_{k+1}^-\otimes K$ is an isomorphism.
\end{lemma}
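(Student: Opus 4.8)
The plan is to reduce the statement to a single inequality between degrees, and then to settle both parts using the trace-form self-duality of the weight grading together with the semistability of $(\End W,\ad(\varphi))$.

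First I would compute $\chi(C_k^\bullet(W,\Omega,\varphi))$ directly. The hypercohomology long exact sequence of the two-term complex $C_k^\bullet:U_k^+\xrightarrow{\ad(\varphi)}U_{k+1}^-\otimes K$ gives $\chi(C_k^\bullet)=\chi(U_k^+)-\chi(U_{k+1}^-\otimes K)$, so Riemann--Roch yields
\begin{equation}\label{eq:plan-chi}
\chi(C_k^\bullet(W,\Omega,\varphi))=\deg U_k^+-\deg U_{k+1}^- -(g-1)\big(\rk U_k^+ +\rk U_{k+1}^-\big).
\end{equation}
Writing $\Phi=\ad(\varphi)|_{U_k^+}$ and $\rho=\rk\Phi$, the asserted bound is \emph{equivalent} to the purely sheaf-theoretic estimate $\deg U_k^+-\deg U_{k+1}^-\le 2(g-1)\rho$, and this is what I would prove.

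For the estimate I would factor $\Phi$ as $U_k^+\twoheadrightarrow U_k^+/N\hookrightarrow U_{k+1}^-\otimes K$, where $N=\ker\Phi$ is the saturated kernel (of rank $\rk U_k^+-\rho$) and $J=\overline{\im\Phi}$ is the saturation of the image (of rank $\rho$); then $\deg U_k^+=\deg N+\deg(U_k^+/N)\le\deg N+\deg J$. Two inputs bound the right-hand side. \emph{(i)} By Proposition \ref{prop:equivalence of semistability} and Proposition \ref{stable U*(2n)-Higgs bundles}, $(W,\varphi)$ is a polystable $\GL(2n,\CC)$-Higgs bundle of degree $0$, so by the Hitchin--Kobayashi correspondence (\cite{hitchin:1987,garcia-gothen-mundet:2008}) $(\End W,\ad(\varphi))$ is a semistable Higgs bundle of degree $0$; since $N=\ker\Phi$ is annihilated by $\ad(\varphi)$ it is an $\ad(\varphi)$-invariant subsheaf of $\End W$, whence $\deg N\le 0$. \emph{(ii)} To bound $\deg J$ I would use the trace form on $\End W$, which is nondegenerate, satisfies $U^+\perp U^-$, and pairs $U_k^\pm$ with $U_{-k}^\pm$, giving $(U_k^+)^*\cong U_{-k}^+$ and $(U_{k+1}^-)^*\cong U_{-k-1}^-$. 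As $\ad(\varphi)$ is skew-adjoint for the trace form, the transpose $\Phi^t$ is identified, up to a twist by $K^{-1}$ and a sign, with $\ad(\varphi)|_{U_{-k-1}^-}:U_{-k-1}^-\to U_{-k}^+\otimes K$; its kernel is again $\ad(\varphi)$-invariant, hence of degree $\le 0$. Since $\ker\Phi^t\cong(C/J)^*$ with $C=U_{k+1}^-\otimes K$, unwinding the $K$-twist gives $\deg J=\deg U_{k+1}^-+\deg\ker(\ad(\varphi)|_{U_{-k-1}^-})+2(g-1)\rho\le\deg U_{k+1}^-+2(g-1)\rho$. Combining \emph{(i)} and \emph{(ii)} yields $\deg U_k^+\le\deg U_{k+1}^-+2(g-1)\rho$, which by \eqref{eq:plan-chi} is the required inequality.

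Finally I would deduce the characterization of the case $\chi=0$. If $\Phi$ is an isomorphism then $C_k^\bullet$ is acyclic, so all its hypercohomology vanishes and $\chi=0$. Conversely, if $\chi(C_k^\bullet)=0$ then, since $g\ge 2$, the inequality forces $2\rho\ge\rk U_k^+ +\rk U_{k+1}^-$; as $\rho\le\min(\rk U_k^+,\rk U_{k+1}^-)$, this compels $\rho=\rk U_k^+=\rk U_{k+1}^-$, so $\Phi$ is generically invertible between bundles of equal rank, and \eqref{eq:plan-chi} then reads $\deg U_k^+=\deg(U_{k+1}^-\otimes K)$, so the two bundles also have equal degree; hence $\det\Phi$ is a nowhere-vanishing section of a degree-zero line bundle and $\Phi$ is an isomorphism. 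I expect the main obstacle to be step \emph{(ii)}: the image $\overline{\im\Phi}$ is \emph{not} $\ad(\varphi)$-invariant and cannot be estimated by semistability directly, so one is forced to pass to the transpose via the trace-form self-duality, converting the image bound into a kernel bound for $\ad(\varphi)|_{U_{-k-1}^-}$; the bookkeeping of the $K$-twists throughout is the delicate part.
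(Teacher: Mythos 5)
Your proof is correct and follows essentially the same route as the paper: the paper reduces the lemma to the Riemann--Roch identity plus the two degree bounds $\deg(\ker)\leq 0$ and the cokernel estimate, which it imports from Lemma 3.11 of \cite{bradlow-garcia-prada-gothen:2008} and attributes precisely to the two facts you use (semistability of $(\End W,\ad(\varphi))$ and the trace-form self-duality exchanging $U_k^{\pm}$ with $U_{-k}^{\pm}$). You have simply written out the details of those cited inequalities, so there is nothing substantively different to compare.
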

\proof
This is essentially Lemma 3.11 of \cite{bradlow-garcia-prada-gothen:2008} (see also Proposition 4.4 of \cite{bradlow-garcia-prada-gothen:2003}). The proof in those papers is for $\GL(n,\CC)$ and $\U(p,q)$-Higgs bundles, but the argument works in the general setting of $G$-Higgs bundles (see Remark 4.16 of \cite{bradlow-garcia-prada-gothen:2003}): the key facts are that for a stable $G$-Higgs bundle, $(E_{H^\CC},\varphi)$, the Higgs vector bundle $(E_{H^\CC}\times_{\Ad}\liegc,\ad(\varphi))$ is semistable, and that there is a natural $\ad$-invariant isomorphism $E_{H^\CC}\times_{\Ad}\liegc\cong(E_{H^\CC}\times_{\Ad}\liegc)^*$ given by an invariant pairing on $\liegc$ (e.g. the Killing form). So we will only give a sketch of the proof here.

In the following we shall use the abbreviated notations $C_k^\bullet=C^\bullet_k(W,\Omega,\varphi)$ and
$$\varphi_k^{\pm}=\ad(\varphi)|_{U_k^{\pm}}: U_k^+ \longrightarrow U_{k+1}^-\otimes K.$$

By the Riemann-Roch theorem we have
\begin{equation}\label{eq:RR-C_k}
    \chi(C_k^\bullet) = (1-g)\bigl(\rk(U_k^+)+\rk(U_{k+1}^-)\bigr) +\deg(U_k^+) - \deg(U_{k+1}^-),
\end{equation}
thus we can prove the inequality stated in the lemma by estimating the difference $\deg(U_k^+) - \deg(U_k^-)$.  In order to
do this, we note first that there are short exact sequences of sheaves
$$0 \to \ker(\varphi_k^+) \to U_k^+ \to \im(\varphi_k^+) \to 0$$ and $$0 \to \im(\varphi_k^+) \to U_{k+1}^-\otimes K\to \coker(\varphi_k^+)\to 0.$$
It follows that
\begin{equation}\label{eq:degUU1}
\deg(U_k^+) - \deg(U_{k+1}^-)=\deg(\ker(\varphi_k^+)) + (2g-2)\rk(U_{k+1}^-) - \deg(\coker(\varphi_k^+)).
\end{equation}

The following inequalities are proved in the proof of Lemma 3.11 in \cite{bradlow-garcia-prada-gothen:2008}:
\begin{align}
 \deg(\ker(\varphi_k^+)) &\leq 0, \label{eq:3}\\
 -\deg(\coker(\varphi_k^+)) &\leq (2g-2)\bigl(-\rk(U_{k+1}^-) + \rk(\varphi_k^+)). \label{eq:4}
\end{align}

Combining (\ref{eq:3}) and (\ref{eq:4}) with \eqref{eq:degUU1} we obtain $$\deg(U_k^+)-\deg(U_{k+1}^-)\leq (2g-2)\rk(\varphi_k^+),$$ which, together with \eqref{eq:RR-C_k}, proves the inequality stated in the lemma.

Finally, if  $\chi(C_k^\bullet) = 0$ then $$\rk(\varphi_k^+) = \rk(U_k^+) =
  \rk(U_{k+1}^- \otimes K)$$ hence $\deg(\ker(\varphi_k^+))=0$. Moreover, it is shown again in the proof of Lemma 3.11 \cite{bradlow-garcia-prada-gothen:2008} that $\deg(\coker(\varphi_k^+))=0$. Thus, from (\ref{eq:degUU1}),
$$\deg(U_k^+)=\deg(U_{k+1}^-\otimes K),$$ showing that $\varphi_k^+$ is an isomorphism.
\endproof

The following result is fundamental for the description of the stable and simple local minima of $f$.

\begin{theorem}\label{ad}
Let $(W,\Omega,\varphi)\in\M_{\U^*(2n)}$ be a stable and simple critical point of the Hitchin functional $f$. Then
$(W,\Omega,\varphi)$ is a local minimum if and only if either $\varphi=0$ or $$\mathrm{ad}(\varphi)|_{U_k^+}:U_k^+\longrightarrow U_{k+1}^-\otimes K$$ is an isomorphism for all $k\geq 1$.
\end{theorem}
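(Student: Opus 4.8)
The plan is to read off minimality from the weight decomposition of the Hessian and then to force the isomorphism condition by squeezing an Euler characteristic between two bounds. By Proposition~\ref{eigen}, $(W,\Omega,\varphi)$ is a local minimum of $f$ if and only if $\HH^1(C^\bullet_k(W,\Omega,\varphi))=0$ for every $k\geq 1$; here one uses that the weights of $\End(W)$ under $\ad(\psi)$ are integer-valued (the Hodge decomposition $W=\bigoplus_{j}F_j$ has consecutive indices, so each $U_{i,j}=\Hom(F_j,F_i)$ sits in integer weight $i-j$), so that ``positive weight'' is the same as $k\geq 1$. Thus the theorem reduces to proving, for each fixed $k\geq 1$, the equivalence of $\HH^1(C^\bullet_k)=0$ with $\ad(\varphi)|_{U_k^+}\colon U_k^+\to U_{k+1}^-\otimes K$ being an isomorphism, plus the trivial remark that $\varphi=0$ is a minimum.

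First I would dispose of the easy implications. If $\varphi=0$ then $f(W,\Omega,\varphi)=\|\varphi\|_{L^2}^2=0$ is the absolute minimum of the non-negative functional $f$, so the point is certainly a local minimum. Conversely, if $\ad(\varphi)|_{U_k^+}$ is a bundle isomorphism for every $k\geq 1$, then for each such $k$ the two-term complex $C^\bullet_k\colon U_k^+\to U_{k+1}^-\otimes K$ has an isomorphism as differential and is therefore acyclic; in particular $\HH^1(C^\bullet_k)=0$ for all $k\geq 1$, and Proposition~\ref{eigen} gives that $(W,\Omega,\varphi)$ is a local minimum.

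The heart of the matter is the remaining implication, which I would obtain by a two-sided estimate on $\chi(C^\bullet_k)$. Assume $(W,\Omega,\varphi)$ is a local minimum with $\varphi\neq 0$ and fix $k\geq 1$. Minimality gives $\HH^1(C^\bullet_k)=0$, hence $\chi(C^\bullet_k)=\dim\HH^0(C^\bullet_k)+\dim\HH^2(C^\bullet_k)\geq 0$. On the other hand Lemma~\ref{lem:adphi isomo iff char=0} provides the upper bound $\chi(C^\bullet_k)\leq (g-1)\bigl(2\rk(\ad(\varphi)|_{U_k^+})-\rk(U_k^+)-\rk(U_{k+1}^-)\bigr)$, and since the generic rank of $\ad(\varphi)|_{U_k^+}$ is at most the ranks of both its source $U_k^+$ and its target $U_{k+1}^-\otimes K$, one has $2\rk(\ad(\varphi)|_{U_k^+})\leq \rk(U_k^+)+\rk(U_{k+1}^-)$; as $g\geq 2$ this makes the upper bound $\leq 0$. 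The two inequalities force $\chi(C^\bullet_k)=0$, and the equality clause of Lemma~\ref{lem:adphi isomo iff char=0} then yields precisely that $\ad(\varphi)|_{U_k^+}$ is an isomorphism. Running this over all $k\geq 1$ completes the proof.

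The real obstacle is the upper bound on $\chi(C^\bullet_k)$, i.e.\ Lemma~\ref{lem:adphi isomo iff char=0} itself, which rests on the semistability of the adjoint Higgs bundle and on degree estimates for $\ker$ and $\coker$ of $\ad(\varphi)$; granting that lemma, everything else is the elementary rank bookkeeping above together with the acyclicity of a two-term complex whose differential is an isomorphism. I would be most careful about the integrality of the weights and about matching the sign conventions, so that the threshold $k\geq 1$ is the correct one.
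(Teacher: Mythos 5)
Your proof is correct, and it follows the same overall skeleton as the paper's: reduce via Proposition~\ref{eigen} to the vanishing of $\HH^1(C^\bullet_k)$ for $k\geq 1$ and then convert that into the isomorphism condition through Lemma~\ref{lem:adphi isomo iff char=0}. The difference lies in which ingredients you invoke. The paper's proof uses only the \emph{equality} clause of Lemma~\ref{lem:adphi isomo iff char=0} and gets the vanishing of $\HH^0(C^\bullet_k)$ and $\HH^2(C^\bullet_k)$ from the general vanishing theorem for stable and simple $G$-Higgs bundles (Proposition 3.17 of \cite{garcia-gothen-mundet:2008}, applied via Corollary~\ref{cor:stable and simple imply smooth as GL}); with all three hypercohomologies controlled, $\chi(C^\bullet_k)=0$ follows in the converse direction, and in the forward direction $\chi=0$ plus $\HH^0=\HH^2=0$ gives $\HH^1=0$. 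You instead bypass that vanishing theorem entirely: in the forward direction you observe that a two-term complex whose differential is a bundle isomorphism is acyclic, and in the converse direction you squeeze $\chi(C^\bullet_k)$ between the lower bound $\dim\HH^0+\dim\HH^2\geq 0$ (from minimality) and the upper bound of Lemma~\ref{lem:adphi isomo iff char=0} together with the elementary estimate $2\rk(\ad(\varphi)|_{U_k^+})\leq\rk(U_k^+)+\rk(U_{k+1}^-)$. This makes your argument marginally more self-contained (it actually uses the inequality half of the lemma, which the paper states but does not exploit in this proof), at the cost of not recording the sharper fact that $\HH^0(C^\bullet_k)=\HH^2(C^\bullet_k)=0$. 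Your attention to the integrality of the weights, which justifies the threshold $k\geq 1$, is also correct: the indices $j$ of the $F_j$ differ by integers, so $i-j\in\Z$.
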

\proof
Suppose $\varphi\neq 0$ and that $\ad(\varphi)|_{U_k^+}$ is an isomorphism for every $k\geq 1$. Then, Lemma \ref{lem:adphi isomo iff char=0} says that this is equivalent to 
$$\dim\mathbb{H}^1(C_k^\bullet(W,\Omega,\varphi))=\dim\mathbb{H}^0(C_k^\bullet(W,\Omega,\varphi))+\dim\mathbb{H}^2(C_k^\bullet(W,\Omega,\varphi))$$ for all $k\geq 1$. Now, since $(W,\Omega,\varphi)$ is stable and simple, then it is stable as a $\GL(2n,\CC)$-Higgs bundle, by Corollary \ref{cor:stable and simple imply smooth as GL}. Furthermore, $\U^*(2n)$ is semisimple, so from Proposition 3.17 of \cite{garcia-gothen-mundet:2008} follows that $\mathbb{H}^0(C^\bullet(W,\Omega,\varphi))=\mathbb{H}^2(C^\bullet(W,\Omega,\varphi))=0$, so $\mathbb{H}^0(C_k^\bullet(W,\Omega,\varphi))=\mathbb{H}^2(C_k^\bullet(W,\Omega,\varphi))=0$ for every $k\geq 1$.
Then $\mathbb{H}^1(C_k^\bullet(W,\Omega,\varphi))=0$ for every $k\geq 1$ and the result follows from Proposition \ref{eigen}.

The converse statement is now immediate.
\endproof

Using this, one can describe the smooth local minima of the Hitchin functional $f$.

\begin{proposition}\label{min}
Let the $\U^*(2n)$-Higgs bundle $(W,\Omega,\varphi)$ be a critical point of the Hitchin functional
$f$ such that $(W,\Omega,\varphi)$ is stable and simple (hence smooth). Then $(W,\Omega,\varphi)$ represents a local minimum if and only if $\varphi=0$.
\end{proposition}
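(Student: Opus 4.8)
The plan is to read off both implications from Theorem~\ref{ad}. The implication ``$\varphi=0\Rightarrow$ local minimum'' is immediate, since $\varphi=0$ is literally one of the two alternatives in the conclusion of that theorem. So the whole content lies in the converse, which I would establish by contradiction: assume $(W,\Omega,\varphi)$ is a stable and simple critical point which is a local minimum, but $\varphi\neq 0$.

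Since $\varphi\neq 0$, the Hodge decomposition (\ref{eq:decomposition of W}) is nontrivial, so $m\geq 1/2$, the number $2m$ is a positive integer, and $F_m\neq 0$ by the choice of $m$ as the top index. By Theorem~\ref{ad}, the map $\ad(\varphi)|_{U_k^+}\colon U_k^+\to U_{k+1}^-\otimes K$ is an isomorphism for every $k\geq 1$; I would use only the extreme case $k=2m$. Because $2m$ is the highest weight occurring in (\ref{eq:decomposition of End}), any strictly larger weight bundle vanishes, so $U_{2m+1}^-=0$, and the isomorphism then forces $U_{2m}^+=0$.

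The crux is to compute this top-weight summand. By definition $U_{2m}=\Hom(F_{-m},F_m)$, and I would identify an endomorphism $A\in\Hom(F_{-m},F_m)$ with the bilinear form $(v,w)\mapsto\Omega(Av,w)$ on $F_{-m}$, using only that $\Omega$ pairs $F_m$ nondegenerately with $F_{-m}$, i.e. the isomorphism $\omega_{-m}\colon F_{-m}\xrightarrow{\ \cong\ }F_m^{*}$ of (\ref{FjcongF-j*L}). A direct check shows that $A$ lies in $U^+=\Lambda_\Omega^2W$ (that is, $A$ is skew with respect to $\Omega$) precisely when this associated form is \emph{symmetric}; this is the fiberwise incarnation of the fact that $\mathfrak{sp}(2n,\CC)\cong S^2\mathbb{W}$ for the standard representation $\mathbb{W}$. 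Hence $U_{2m}^+\cong S^2F_{-m}^{*}\cong S^2F_m$, so the vanishing $U_{2m}^+=0$ gives $S^2F_m=0$, which forces $F_m=0$ and contradicts $F_m\neq 0$. Thus no local minimum with $\varphi\neq 0$ can exist, and the proposition follows.

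I expect the only genuine obstacle to be this last identification $U_{2m}^+\cong S^2F_m$ (and \emph{not} $\Lambda^2F_m$): it is exactly here that the symplectic structure is decisive, entering through the symmetry of $\varphi$ with respect to $\Omega$ and through the symmetry type of the form attached to $\Omega$. Were the top-weight summand of $U^+$ the exterior square, the vanishing $U_{2m}^+=0$ would only yield $\rk F_m\leq 1$ and no contradiction; it is precisely the fact that the maximal compact subgroup of $\U^*(2n)$ is symplectic, so that $U^+$ is the symmetric square of $W$, that makes the argument close.
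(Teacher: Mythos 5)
Your proposal is correct and follows essentially the same route as the paper: both reduce to Theorem~\ref{ad}, look at the top weight $k=2m$ where $U_{2m+1}^-=0$, and derive a contradiction from the fact that $U_{2m}^+$ cannot vanish. The paper phrases this last step contrapositively (if $U_{2m}^+=0$ then every map $F_{-m}\to F_{-m}^*$ would be skew-symmetric, which is absurd), whereas you identify $U_{2m}^+\cong S^2F_m$ directly, but this is the same computation, and your remark that the symmetric (rather than exterior) square is what makes the argument close is exactly the point.
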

\proof Suppose that $(W,\Omega,\varphi)$ is a critical point of $f$ with $\varphi\neq 0$. Hence, as explained above, we have the decompositions (\ref{eq:decomposition of W}) and (\ref{eq:decomposition of End}) of $W$ and of $\End(W)$ respectively.

Consider $$\mathrm{ad}(\varphi)|_{U_{2m}^+}:U_{2m}^+\longrightarrow U_{2m+1}^-\otimes K.$$
We have that $U_{2m+1}^-\otimes K=0$, but $U_{2m}^+\neq 0$.
Indeed, if $U_{2m}^+=0$, then $$\Hom(F_{-m},F_m)=U_{2m}=U_{2m}^-$$ i.e. given any $g:F_{-m}\to F_m$, we would have $g\in S_\Omega^2W$, thus $$\omega_mg=g^t\omega_{-m}=-(\omega_mg)^t$$ where $\omega_{\pm m}$ are the isomorphisms defined in (\ref{FjcongF-j*L}). In other words, $\omega_mg\in H^0(\Lambda^2F_{-m}^*)$, for any $g$. But $\omega_m$ is an isomorphism, so any map $F_{-m}\to F_{-m}^*$ is of the form $\omega_mg$, for some $g$. This shows that 
$H^0(\Hom(F_{-m},F_{-m}^*))=H^0(\Lambda^2F_{-m}^*)$ which is clearly not possible.

So, $U_{2m}^+\neq 0$, therefore $\ad(\varphi)|_{U_{2m}^+}$ is not an isomorphism and by the previous theorem, $(W,\Omega,\varphi)$ is not a local minimum of the Hitchin functional.
\endproof


In \cite{hitchin:1992}, Hitchin observed that the Hitchin functional is additive with respect to direct sum of Higgs bundles. In our case this means that $f(\bigoplus(V_i,\Omega_i,\varphi_i))=\sum f(V_i,\Omega_i,\varphi_i)$.

\begin{proposition}\label{min for stable and not simple}
A stable $\U^*(2n)$-Higgs bundle $(W,\Omega,\varphi)$ represents a local minimum of $f$ if and
only if $\varphi=0$.
\end{proposition}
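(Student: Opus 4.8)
The plan is to reduce the statement to the stable and simple case already settled in Proposition \ref{min}, by combining the decomposition of stable non-simple objects furnished by Proposition \ref{prop:description of stable and non-simple} with the additivity of $f$ under direct sums noted just above. The ``if'' direction is immediate: since $f(W,\Omega,\varphi)=\|\varphi\|_{L^2}^2\geq 0$, vanishing of $\varphi$ gives $f=0$, which is the global minimum. So all the content lies in the ``only if'' direction, which I would establish in contrapositive form: if $(W,\Omega,\varphi)$ is stable with $\varphi\neq 0$, then it is not a local minimum of $f$.

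First I would dispose of the stable and simple case. If $(W,\Omega,\varphi)$ is stable and simple, then by Proposition \ref{prop:stable and simple imply smooth} it is a smooth point of $\M_{\U^*(2n)}$, so $f$ is smooth in a neighbourhood of it and any local minimum is necessarily a critical point of $f$. Hence a stable and simple local minimum is in particular a stable and simple critical point, and Proposition \ref{min} forces $\varphi=0$. Put in contrapositive form: a stable and simple $(W,\Omega,\varphi)$ with $\varphi\neq 0$ cannot be a local minimum of $f$.

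For the general case, suppose $(W,\Omega,\varphi)$ is stable with $\varphi\neq 0$; if it is simple we are done by the previous step, so assume it is non-simple. By Proposition \ref{prop:description of stable and non-simple} we may write $(W,\Omega,\varphi)=\bigoplus_{i=1}^r(W_i,\Omega_i,\varphi_i)$ as a direct sum of stable and simple $\U^*(\rk W_i)$-Higgs bundles, with $r>1$ and $\varphi=\bigoplus_i\varphi_i$. Since $\varphi\neq 0$, some summand satisfies $\varphi_{i_0}\neq 0$, and by the simple case this summand is not a local minimum of $f$ on $\M_{\U^*(\rk W_{i_0})}$. I would then transport this failure of minimality to the total object. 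The direct-sum construction defines a continuous map $\Phi\colon\prod_i\M_{\U^*(\rk W_i)}\to\M_{\U^*(2n)}$ (a direct sum of polystable objects is again polystable, for instance via the Hitchin--Kobayashi correspondence), and additivity gives $f\circ\Phi=\sum_i f$. Given any neighbourhood $U$ of $(W,\Omega,\varphi)$, the open set $\Phi^{-1}(U)$ contains a product neighbourhood $\prod_i V_i$ of $\big((W_1,\Omega_1,\varphi_1),\dots,(W_r,\Omega_r,\varphi_r)\big)$; choosing in $V_{i_0}$ a point with $f$-value strictly smaller than $f(W_{i_0},\Omega_{i_0},\varphi_{i_0})$ and leaving the remaining summands fixed produces, through $\Phi$ and additivity, a point of $U$ whose $f$-value is strictly smaller than $f(W,\Omega,\varphi)$. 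Thus $(W,\Omega,\varphi)$ is not a local minimum, which proves the contrapositive and hence the proposition.

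The main obstacle is precisely this passage from a single summand to the total object: one must guarantee that perturbing one factor keeps the direct sum inside $\M_{\U^*(2n)}$ and that the perturbation genuinely lowers $f$ arbitrarily close to the basepoint. This is exactly what the continuity of $\Phi$ together with the additivity of $f$ supplies. I would emphasize that the argument uses only the \emph{existence} of nearby points with strictly smaller $f$-value in each factor, not continuous descending paths, so it neatly avoids having to decide whether the non-simple point $(W,\Omega,\varphi)$ is itself a smooth or critical point of $\M_{\U^*(2n)}$.
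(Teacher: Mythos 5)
Your proposal is correct and follows essentially the same route as the paper: reduce to the stable-and-simple case via Proposition \ref{prop:description of stable and non-simple} and the additivity of $f$, then invoke Proposition \ref{min} (the paper phrases this directly --- a local minimum forces each summand to be a local minimum --- while you take the contrapositive, and you make explicit the continuity of the direct-sum map that the paper leaves implicit).
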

\proof If $(W,\Omega,\varphi)$ is simple, then this is true from Proposition \ref{min}.
So, assume that the local minimum $(W,\Omega,\varphi)$ of $f$ is stable and non-simple. Then, from Proposition \ref{prop:description of stable and non-simple}, we know that $(W,\Omega,\varphi)$ decomposes as a direct sum of stable and simple $\U^*(2n_i)$-Higgs bundles on the corresponding lower rank moduli spaces. Moreover, using the additivity of $f$, we know that these are also local minima of $f$. So, in those moduli spaces we can apply Proposition \ref{min}, and the additivity of $f$ implies that the result follows.
\endproof

Now we can give the description of the subvariety of local minima of the Hitchin functional $f$.

\begin{theorem}\label{min for polystable}
A polystable $\U^*(2n)$-Higgs bundle $(W,\Omega,\varphi)$ represents a local minimum if and
only if $\varphi=0$.
\end{theorem}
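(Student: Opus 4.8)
The plan is to treat the easy direction first and then reduce the hard direction to cases that are already settled. The ``if'' direction is immediate: since $f\geq 0$ everywhere and $f(W,\Omega,\varphi)=0$ exactly when $\varphi=0$, any $\U^*(2n)$-Higgs bundle with vanishing Higgs field is a global, hence local, minimum. So the content is the ``only if'' direction, and here I would argue by decomposing the bundle and exploiting the additivity of the Hitchin functional noted just above.

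Assume then that the polystable $(W,\Omega,\varphi)$ represents a local minimum of $f$. First I would invoke Theorem \ref{thm:description of polystable} to write
$$(W,\Omega,\varphi)=\bigoplus_i (W_i,\Omega_i,\varphi_i),$$
where each summand is a stable $G_i$-Higgs bundle with $G_i$ one of $\U^*(2n_i)$, $\GL(n_i,\CC)$, $\Sp(2n_i)$ or $\U(n_i)$. The next step is to show that minimality passes to the summands. Here ``local minimum'' is taken in the topological sense of Proposition \ref{proper}, which makes sense even at the singular points of $\M_{\U^*(2n)}$ that a strictly polystable bundle may represent. If some $(W_i,\Omega_i,\varphi_i)$ were not a local minimum of $f$ in its own moduli space, there would be a nearby Higgs bundle of the same type with strictly smaller $f$; taking its direct sum with the remaining (fixed) summands gives a point of $\M_{\U^*(2n)}$ arbitrarily close to $(W,\Omega,\varphi)$ whose total value of $f$ is strictly smaller, by the additivity $f(\bigoplus(\cdot))=\sum f(\cdot)$. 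This contradicts minimality, so each summand is itself a local minimum in $\M_{G_i}$.

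It then remains to show $\varphi_i=0$ for every summand, and I would dispatch the four types in turn. For a $\U(n_i)$- or $\Sp(2n_i)$-summand this is automatic, since by the proof of Theorem \ref{thm:description of polystable} a summand is labelled of this type precisely when its Higgs field already vanishes. For a $\U^*(2n_i)$-summand, which is stable, Proposition \ref{min for stable and not simple} applies and forces $\varphi_i=0$. The only remaining case is a stable $\GL(n_i,\CC)$-summand, and for this I would appeal to the classical fact, due to Hitchin \cite{hitchin:1987} and Simpson \cite{simpson:1988,simpson:1992}, that a stable $\GL(n,\CC)$-Higgs bundle is a local minimum of the Hitchin functional if and only if its Higgs field is zero --- this being the exact $\GL(n,\CC)$-analogue of Proposition \ref{min}, proved by the same weight-space argument. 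Granting this, every $\varphi_i=0$, whence $\varphi=\sum_i\varphi_i=0$, as claimed.

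The main obstacle I anticipate is precisely the $\GL(n_i,\CC)$-summand. Unlike the $\U^*(2n_i)$-summands it is not covered by any result proved earlier in the paper, and viewed inside $\U^*(2n)$ through the embedding of $\GL(n_i,\CC)$ described before Theorem \ref{thm:description of polystable} it is polystable but neither stable nor simple, so Proposition \ref{min for stable and not simple} cannot be applied to it directly; one must either cite the external minima result for complex general linear groups or reprove it by repeating the weight-decomposition and top-weight $\ad(\varphi)$ argument of Proposition \ref{min} in the $\GL$ setting. A secondary point to check with care is that deforming a single summand within $\M_{G_i}$ really does produce a path in $\M_{\U^*(2n)}$ through $(W,\Omega,\varphi)$ --- i.e.\ that the relevant direct sums remain polystable $\U^*(2n)$-Higgs bundles --- since it is this that legitimizes the additivity argument at the (possibly singular) point under consideration.
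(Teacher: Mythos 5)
Your proposal is correct and follows essentially the same route as the paper: decompose the polystable object via Theorem \ref{thm:description of polystable} into stable $G_i$-Higgs bundles, use the additivity of $f$ to pass local minimality to the summands, and then handle the compact groups trivially, the $\GL(n_i,\CC)$ summands by Hitchin's classical result, and the $\U^*(2n_i)$ summands by Proposition \ref{min for stable and not simple}. You merely spell out more explicitly the additivity step and the reliance on the external $\GL(n,\CC)$ minima result, both of which the paper also uses (the latter cited to \cite{hitchin:1987}).
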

\proof
From Theorem \ref{thm:description of polystable} we know that a polystable minima of $f$ decomposes as a direct sum of stable $G_i$-Higgs bundles where $G_i=\U^*(2n_i),\,\Sp(2n_i),\,\GL(n_i,\CC)$ or $\U(n_i)$. Now, for the groups $\Sp(n_i)$ or $\U(n_i)$ it is clear that the local minima of $f$ on the corresponding lower rank moduli spaces must have zero Higgs field (these groups are compact). For $\GL(n_i,\CC)$ it is well-known (cf. \cite{hitchin:1987}) that stable local minima of $f$ on the corresponding lower rank moduli space must also have $\varphi_i=0$. For stable $\U^*(2n_i)$-Higgs bundle, we can apply Proposition \ref{min for stable and not simple} to draw the same conclusion, and the result is proved.
\endproof

\section{Connected components of the space of $\U^*(2n)$-Higgs bundles}\label{compM}

From Theorem \ref{min for polystable} we conclude that the subvariety $\N_{\U^*(2n)}$ of local minima of the Hitchin functional $f:\M_{\U^*(2n)}\to\R$ is the moduli space of $\Sp(2n,\CC)$-principal  bundles or, 
in the language of Higgs bundles, is the moduli space of $\Sp(2n)$-Higgs bundles: $$\N_{\U^*(2n)}\cong\M_{\Sp(2n)}.$$

Ramanathan has shown \cite{ramanathan:1996a,ramanathan:1996b} that 
if $G$ is a connected reductive group then there is a bijective correspondence 
between $\pi_0$ of the moduli space of $G$-principal bundles  and $\pi_1G$. Hence, 
since $\Sp(2n)$ is simply-connected, it follows that $\M_{\Sp(2n)}$ is connected and, therefore, 
the same is true for $\N_{\U^*(2n)}$. So, using Proposition \ref{proper}, we can state our result.

\begin{theorem}
 Let $X$ be a compact Riemann surface of genus $g\geq 2$ and let $\M_{\U^*(2n)}$ be the moduli space of $\U^*(2n)$-Higgs bundles. Then $\M_{\U^*(2n)}$ is connected.
\end{theorem}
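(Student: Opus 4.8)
The plan is to run the standard Hitchin--Morse argument, using the Hitchin functional $f\colon\M_{\U^*(2n)}\to\R$ of \eqref{Hitchin proper function quadruples} together with the abstract connectedness criterion of Proposition \ref{proper}. Applying that proposition with $\M'=\M_{\U^*(2n)}$ (which is trivially closed in itself), it suffices to show that the subvariety $\N_{\U^*(2n)}$ of local minima of $f$ is connected. The legitimacy of this reduction rests on the properness of $f$, established by Hitchin, which guarantees that $f$ attains a minimum on each closed subspace, so that every connected component of $\M_{\U^*(2n)}$ meets the minima locus.

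First I would identify the minima explicitly. By Theorem \ref{min for polystable}, a polystable $\U^*(2n)$-Higgs bundle $(W,\Omega,\varphi)$ represents a local minimum of $f$ if and only if $\varphi=0$. Setting $\varphi=0$ leaves precisely the datum of a polystable $\Sp(2n,\CC)$-principal bundle (equivalently, a polystable symplectic vector bundle $(W,\Omega)$), so the minima subvariety is canonically identified with the moduli space of $\Sp(2n)$-Higgs bundles with vanishing Higgs field; that is, $\N_{\U^*(2n)}\cong\M_{\Sp(2n)}$. Here one must check that polystability of $(W,\Omega,0)$ as a $\U^*(2n)$-Higgs bundle coincides with polystability of the underlying $\Sp(2n,\CC)$-bundle, which is immediate from Proposition \ref{prop:orthogonal-stability} once the Higgs field is dropped.

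Next I would establish connectedness of $\M_{\Sp(2n)}$. By Ramanathan's classification of the connected components of the moduli space of $G$-principal bundles for $G$ connected reductive, there is a bijection between $\pi_0$ of that moduli space and $\pi_1 G$. Since the compact symplectic group $\Sp(2n)$ is simply connected, $\pi_1\Sp(2n)=0$, whence $\M_{\Sp(2n)}$ is connected, and therefore so is $\N_{\U^*(2n)}$. Feeding this back into Proposition \ref{proper} yields at once that $\M_{\U^*(2n)}$ is connected.

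The genuine content of the argument lies upstream, in Theorem \ref{min for polystable}, which pins down the minima as exactly the $\varphi=0$ locus, and this is where I expect the main difficulty to have been concentrated: one must control the Hessian of $f$ through the weight decomposition of the deformation complex and rule out nonzero minima for every summand type ($\U^*$, $\GL$, $\Sp$, $\U$) arising in the polystable decomposition of Theorem \ref{thm:description of polystable}. Granting that analysis, the final assembly recorded here is essentially formal.
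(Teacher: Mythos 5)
Your proposal is correct and follows exactly the paper's own argument: identify the minima locus as $\M_{\Sp(2n)}$ via Theorem \ref{min for polystable}, invoke Ramanathan's bijection between $\pi_0$ of the moduli of principal bundles and $\pi_1$ of the (simply connected) group $\Sp(2n)$, and conclude with Proposition \ref{proper}. Nothing to add.
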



\begin{thebibliography}{99}

\bibitem{bradlow-garcia-prada-gothen:2003}
{S. B. Bradlow, O. García-Prada, P. B. Gothen}, \emph{Surface
group representations and $\U(p,q)$-Higgs bundles}, J. Diff. Geom.
\textbf{64} (2003), 111--170.

\bibitem{bradlow-garcia-prada-gothen:2004}
{S. B. Bradlow, O. García-Prada, P. B. Gothen}, \emph{Representations of surface groups in the general linear
group}, Proceedings of the XII Fall Workshop on Geometry and
Physics, Coimbra, 2003, Publicaciones de la RSME, \textbf{7} (2004),
83--94.

\bibitem{bradlow-garcia-prada-gothen:2005}
{S. B. Bradlow, O. García-Prada, P. B. Gothen}, \emph{Maximal
surface group representations in isometry groups of classical
Hermitian symmetric spaces}, Geometriae Dedicata \textbf{122} (2006), 185--213.

\bibitem{bradlow-garcia-prada-gothen:2008}
{S. B. Bradlow, O. García-Prada, P. B. Gothen}, Homotopy groups of moduli spaces of representations, \textit{Topology} \textbf{47} (2008), 203--224.

\bibitem{bradlow-garcia-prada-gothen:2009}
{S. B. Bradlow, O. García-Prada, P. B. Gothen}, \emph{Deformations of maximal representations in $\Sp(4,\R)$}, Preprint arXiv:0903.5496.

\bibitem{bradlow-garcia-prada-mundet:2003}
{S. B. Bradlow, O. García-Prada, I. Mundet i Riera},
\emph{Relative Hitchin-Kobayashi correspondences for principal
pairs}, Quart. J. Math. \textbf{54} (2003), 171--208.

\bibitem{corlette:1988}
{K. Corlette}, \emph{Flat $G$-bundles with canonical metrics},
J. Diff. Geom. \textbf{28} (1988), 361--382.

\bibitem{donaldson:1987}
{S. K. Donaldson}, \emph{Twisted harmonic maps and self-duality
equations}, Proc. London Math. Soc. (3) \textbf{55} (1987),
127--131.

\bibitem{garcia:2007}
{O. García-Prada}, \emph{Involutions of the moduli space of $\SL(n,\CC)$-Higgs bundles and real forms}, Vector Bundles and Low Codimensional Subvarieties: State of the Art and Recent Developments, Quaderni di Matematica, Eds: G. Casnati, F. Catanese and R. Notari (2007), 219--238.

\bibitem{garcia-gothen-mundet:2008}
{O. García-Prada, P. B. Gothen, I. Mundet i Riera}, \emph{The Hitchin-Kobayashi correspondence, Higgs pairs and surface group representations}, Preprint arXiv:0909.4487v2.

\bibitem{garcia-gothen-mundet:2008 II}
{O. García-Prada, P. B. Gothen, I. Mundet i Riera}, \emph{Higgs bundles and surface group representations in the real symplectic group}, Preprint arXiv:0809.0576v3.

\bibitem{garcia-prada-gothen-munoz:2007}
{O. García-Prada, P. B. Gothen, V. Muñoz}, \emph{Betti numbers of the moduli space of rank $3$ parabolic Higgs bundles}, Memoirs Amer. Math. Soc. \textbf{187} (2007).

\bibitem{garcia-prada-mundet:2004}
{O. García-Prada, I. Mundet i Riera}, \emph{Representations of the fundamental group of a closed oriented surface in $\Sp(4,\R)$}, Topology \textbf{43} (2004), 831--855.

\bibitem{garcia-prada-ramanan}
{O. García-Prada, S. Ramanan}, \emph{Involutions of the moduli space of Higgs bundles}, in preparation.

\bibitem{gothen:2001}
{P. B. Gothen}, \emph{Components of spaces of representations and stable triples}, 
Topology \textbf{40} (2001), 823-850.

\bibitem{helgason:2001}
{S. Helgason}, \emph{Differential Geometry, Lie Groups, and Symmetric Spaces}, Graduate Studies in Mathematics
\textbf{34}, American Mathematical Society, 2001.

\bibitem{hitchin:1987}
{N. J. Hitchin}, \emph{The self-duality equations on a Riemann
surface}, Proc. London Math. Soc. (3) \textbf{55} (1987), 59--126.

\bibitem{hitchin:1992}
{N. J. Hitchin}, \emph{Lie groups and Teichm\"{u}ller space},
Topology \textbf{31} (1992), 449--473.

\bibitem{hitching:2005}
{G. H. Hitching}, \emph{Moduli of Symplectic Bundles over Curves}, Ph.D. Thesis, Department of Mathematical Sciences, University of Durham, 2005.

\bibitem{li:1993}
{J. Li}, \textit{The Space of Surface Group Representations}, Manuscripta Math. \textbf{78} (1993), 223--243.

\bibitem{narasimhan-seshadri:1965}
 {M. S. Narasimhan}, {C. S. Seshadri}, \emph{Stable and unitary vector bundles on a compact Riemann surface},
 Ann. Math. \textbf{82} (1965), 540--567.

\bibitem{nitsure:1991}
{N. Nitsure}, \emph{Moduli spaces of semistable pairs on a curve},
\textit{Proc. London Math. Soc.} \textbf{62} (1991), 275--300.

\bibitem{ramanan:1981}
{S. Ramanan}, \emph{Orthogonal and spin bundles over hyperelliptic curves},
Proc. Indian Acad. Sci. (Math. Sci.) \textbf{90}
(1981), 151--166.

\bibitem{ramanathan:1975}
{A. Ramanathan}, \emph{Stable principal bundles on a compact
Riemann surface}, Math. Ann. \textbf{213} (1975), 129--152.

\bibitem{ramanathan:1996a}
{A. Ramanathan}, \emph{Moduli for principal bundles over algebraic
curves: I}, Proc. Indian Acad. Sci. (Math. Sci.) \textbf{106}
(1996), 301--328.

\bibitem{ramanathan:1996b}
{A. Ramanathan}, \emph{Moduli for principal bundles over algebraic
curves: II}, Proc. Indian Acad. Sci. (Math. Sci.) \textbf{106}
(1996), 421--449.

\bibitem{schmitt:2008}
{A. H. W. Schmitt}, \emph{Geometric Invariant Theory and Decorated Principal Bundles},
Zurich Lectures in Advanced Mathematics, European Mathematical Society, 2008.

\bibitem{seshadri:1967}
{C. S. Seshadri}, \emph{Space of unitary vector bundles on a compact Riemann surface},
Ann. Math. \textbf{85} (1967), 303--336.

\bibitem{simpson:1988}
{C. T. Simpson}, \emph{Constructing variations of Hodge structure using
  Yang-Mills theory and applications to uniformization},
\textsl{J. Amer. Math. Soc.} \textbf{1} (1988), 867--918.

\bibitem{simpson:1992}
{C. T. Simpson}, \emph{Higgs bundles and local systems}, Inst.
Hautes \'{E}tudes Sci. Publ. Math. \textbf{75} (1992), 5--95.

\end{thebibliography}
\end{document}